\definecolor{darkgreen}{rgb}{0,0.5,0}
\definecolor{darkblue}{rgb}{0,0,0.7}
\definecolor{darkred}{rgb}{0.9,0.1,0.1}
\newtheorem*{rep@theorem}{\rep@title}
\newcommand{\newreptheorem}[2]{%
\newenvironment{rep#1}[1]{%
 \def\rep@title{#2 \ref{##1}}%
 \begin{rep@theorem}}%
 {\end{rep@theorem}}}
\newtheorem{theorem}{Theorem}
\newtheorem{proposition}{Proposition}
\newtheorem{lemma}[proposition]{Lemma}
\theoremstyle{remark}
\theoremstyle{definition}
\newtheorem{definition}[proposition]{Definition}
\newtheorem{remark}[proposition]{Remark}
\numberwithin{equation}{section}
\numberwithin{proposition}{section}
\newcommand{\Z}{\mathbb{Z}}
\newcommand{\N}{\mathbb{N}}
\newcommand{\R}{\mathbb{R}}
\newcommand{\E}{\mathbb{E}}
\renewcommand{\P}{\mathbb{P}}
\newcommand{\Zd}{\mathbb{Z}^d}
\newcommand{\Rd}{{\mathbb{R}^d}}
\newcommand{\di}{\mathrm{d}}
\newcommand{\inte}[1]{%
 {\kern0pt#1}^{\mathrm{o}}%
}
\renewcommand{\a}{\mathbf{a}}
\renewcommand{\subset}{\subseteq}
\DeclareMathOperator*{\osc}{osc}
\newcommand{\indc}{\mathds{1}}
\newcommand{\X}{\mathcal{X}}
\begin{document}

\title[Convergence to the thermodynamic limit]{Convergence to the thermodynamic limit for random-field random surfaces}

\author[P. Dario]{Paul Dario}
\address[P. Dario]{Universit\'e Claude Bernard Lyon 1, Institut Camille Jordan, 69622 Villeurbanne, France}
\email{paul.dario@univ-lyon1.fr}

\maketitle

\begin{abstract}
We study random surfaces with a uniformly convex gradient interaction in the presence of quenched disorder taking the form of a random independent external field. Previous work on the model has focused on proving existence and uniqueness of infinite-volume gradient Gibbs measures with a given tilt and on studying the fluctuations of the surface and its discrete gradient.

In this work we focus on the convergence of the thermodynamic limit, establishing convergence of the finite-volume distributions with Dirichlet boundary conditions to translation-covariant (gradient) Gibbs measures. Specifically, it is shown that, when the law of the random field has finite second moment and is symmetric, the distribution of the gradient of the surface converges in dimensions $d\geq4$ while the distribution of the surface itself converges in dimensions $d\geq 5$. Moreover, a power-law upper bound on the rate of convergence in Wasserstein distance is obtained. The results partially answer a question discussed by Cotar and K\"{u}lske~\cite{CK12}.

\end{abstract}

\setcounter{tocdepth}{1}
\tableofcontents

\section{Introduction}
\subsection{Description of the model and main results} \label{section1.11} Random surfaces in statistical mechanics are used to model the interface separating two pure thermodynamic phases. In classical effective interface models of this type, the interface is represented by a real-valued field $\left\{ \phi(x) \, : \, x \in \Zd \right\}$ to which one associates an energy determined by an interaction potential applied to the discrete gradient of the surface. In this article, we consider a model of random interfaces in the presence of a random disorder taking the form of a random, independent field. Let us define this model precisely. Given finite set $\Lambda \subset \Z^d$, a map $\eta : \Lambda \to \R$ and an intensity parameter $\lambda > 0$, we assign to each surface $\phi:\Lambda \to \R$ satisfying the Dirichlet boundary condition $\phi = 0$ on $\partial\Lambda$ the energy
 \begin{equation} \label{eq:101071608}
    H_\Lambda^\eta \left( \phi \right) := \sum_{\substack{x, y \in \Lambda^+ \\ |x - y| = 1}} V(\phi(x) - \phi(y)) - \lambda \sum_{x \in \Lambda} \eta(x) \phi(x).
\end{equation}
where $\partial\Lambda$ is the external vertex boundary of $\Lambda$, $\Lambda^+$ is the set $\Lambda\cup\partial\Lambda$, $|\cdot|$ denotes the Euclidean norm and $V:\R \to \R$ is an interaction potential satisfying suitable assumptions. Typically, one considers even $V$ which are twice-continuously differentiable and uniformly convex, i.e., there exist two constants $c_-$ and $c_+$ satisfying
\begin{equation}\label{eq:V ellipticity}
  0 < c_- \leq V''(t) \leq c_+ < \infty.
\end{equation}
The law of random surface is then given by
\begin{equation} \label{eq:defmuLeta}
    \mu_\Lambda^\eta (\di \phi) := \frac{1}{Z_\Lambda^\eta}\exp \left( - H_{\Lambda}^{\eta} (\phi) \right) \prod_{x \in \Lambda} d \phi(x),
\end{equation}
where $d \phi(x)$ denotes the Lebesgue measure on $\R$ and $Z_\Lambda^\eta$ is the constant ensuring that $\mu_\Lambda^\eta$ is a probability measure. We assume that the external field $\eta$ is random, with the random variables $\left( \eta(x) \right)_{x \in \Zd}$ independent with symmetric distribution and unit variance, and wish to study the properties of the random surface distributed according to $\mu_{\Lambda}^\eta$ for a typical realization of the random field $\eta$. We denote by $\P$ the law of the random field, and by $\E$ the corresponding expectation.

A natural question pertaining to the model is whether the distribution~\eqref{eq:defmuLeta} of the random surface or the one of its discrete gradient converges upon considering an increasing sequence of domains $\Lambda_n \subseteq \Zd$ tending to $\Zd$ as $n$ tends to infinity. In the case of the pure system (i.e., when $\eta \equiv 0$), Funaki and Spohn~\cite{FS} proved the existence of this limit for the discrete gradient of the random surface in any dimension $d \geq 1$, as well as the uniqueness of the infinite-volume ergodic gradient Gibbs measures with a specific tilt. Regarding the distribution of the surface, it is known that the limit does not exist in dimensions $d \leq 2$~\cite{BLL75} and, by using the Brascamp-Lieb concentration inequality~\cite{BL76, BL75} (see e.g.,~\cite[Section 9]{F05},~\cite[Section 6]{DD05}) that the limit exists in dimensions $d \geq 3$.

These qualitative properties are modified upon the addition of a quenched random external field to the system. This phenomenon can be observed by considering the specific case of the random-field Gaussian free field, that is the model~\eqref{eq:defmuLeta} with the interaction potential $V(x) = x^2/2$. In this setting, for any fixed realization of the external field $\eta$, the disordered random surface has a multivariate Gaussian distribution whose mean vector and covariance matrix are explicit. One can then rely on explicit computations to prove that, for almost every realization of the disorder, the law of the discrete gradient of the random surface converges weakly to an infinite-volume gradient Gibbs measure in dimensions $d \geq 3$ and does not converge in dimensions $d  \leq 2$. Similarly, one can show that, for almost every realization of the disorder, the law of the random surface converges weakly to an infinite-volume Gibbs measure in dimensions $d \geq 5$ and does not converge in dimensions $d \leq 4$ (see e.g.,~\cite[Appendix A.1]{CK12} where the non-existence of Gaussian Gibbs measures with finite first moment is established in dimensions $d = 3,4$). 

It is natural to expect that these properties persist for the random-field random surface model~\eqref{eq:defmuLeta} with a uniformly-convex interaction potential $V$ (for which the distribution of the random surface is not explicitly known). Indeed, Cotar and K\"{u}lske~\cite{CK12, CK15} proved the existence and uniqueness of infinite-volume translation-covariant gradient Gibbs measures in $d \geq 3$ with a fixed tilt (the uniqueness is established under the necessary condition that the corresponding annealed measure is ergodic with respect to spatial shifts). However, their work left open the question of the convergence of the finite-volume distributions to the infinite-volume Gibbs measures; see the discussion in~\cite[Appendix A]{CK12}. In this article, we work under the assumption that the law of the random field is symmetric and establish that, for almost every realization of the disorder, the distribution of the gradient of the random surface converges weakly to an infinite-volume gradient Gibbs measure in dimensions $d \geq 4$, and that the distribution of the random surface converges to an infinite-volume Gibbs measure in dimensions $d \geq 5$. This resolves the convergence question in the case of a random field with symmetric distribution and with uniformly elliptic interaction potential, apart from the gradient convergence in dimension $d=3$, which is expected but does not follow from our proof; see the discussion below Theorem~\ref{infinitevolgradgibbs}.
Further discussion of the relation of these results to the previous literature can be found in Section~\ref{sec.relresult}.

In order to state the results, we introduce a few additional notations and definitions. Write $\Lambda_L := \left\{ -L,  \ldots, L \right\}^d$, let $\vec{E}(\Zd)$ denote the set of oriented edges of the lattice $\Zd$ (see Section~\ref{section2.1}). For each function $\phi : \Zd \to \R$, we define its discrete gradient $\nabla \phi : \vec{E}(\Zd) \to \R$ by, for any $e = (x , y ) \in \vec{E}(\Zd)$, $\nabla \phi(e) := \phi(y)- \phi(x)$. We denote by $\Omega := \R^{\Zd}$ the set of functions defined on $\Zd$ and valued in $\R$, by $\mathcal{P} \left( \Omega \right)$ the set of probability measures on the space $\Omega $ and by $C_b\left( \Omega \right)$ the set of functions $f : \Omega \to \R$ which depend on finitely many coordinates, are continuous and bounded. For a measure $\mu \in \mathcal{P} \left( \Omega\right)$, we denote by $\left\langle \cdot \right\rangle_\mu$ the expectation with respect to $\mu$. Given a pair of measures $\mu_0 , \mu_1 \in \mathcal{P}\left( \Omega \right)$, we denote by $\Gamma (\mu_0 , \mu_1)$ the set of couplings between $\mu_0$ and $\mu_1$, that is, the set of probability measures on the product space $\Omega \times \Omega$ whose first and second marginals are equal to $\mu_0$ and $\mu_1$ respectively.

We denote by $\X$ the set of \emph{gradient fields}, that is 
\begin{equation} \label{def.Xgradfield}
    \X := \left\{ \chi : \vec{E}(\Zd) \to \R \, : \, \exists \phi : \Zd \to \R, \, \chi = \nabla \phi  \right\}.
\end{equation}
We let $\mathcal{P} \left( \X \right)$ be the set of probability measures on the space $\X$ and $C_b\left(\X \right)$ be the set of functions $f : \X \to \R$ which depend on finitely many coordinates, are continuous and bounded. For a measure $\mu \in \mathcal{P}\left( \X\right)$, we denote by $\left\langle \cdot \right\rangle_\mu$ the expectation with respect to $\mu$. Given a pair of measures $\mu_0 , \mu_1 \in \mathcal{P}\left( \X \right)$, we denote by $\Gamma_\nabla (\mu_0 , \mu_1)$ the set of couplings between $\mu_0$ and $\mu_1$.

For $y \in \Zd$, we denote by $\tau_y : \Omega \to \Omega$ the shift operator, i.e., $\tau_y \phi (x) = \phi (x-y)$. We extend the notation to the set of gradient fields by writing $\tau_y \chi (e) = \chi (e-y)$ and to the set of external fields by writing $\tau_y \eta(x) = \eta (x-y)$. For any measure $\mu \in \mathcal{P}\left( \Omega \right)$ (resp. $\mathcal{P}\left( \X \right)$), we denote by $(\tau_y)_* \mu$ the pushforward of the measure $\mu$ by the shift $\tau_y$. A sequence of probability measures $(\mu_n)_{n\geq 0} \in \mathcal{P} \left( \Omega \right)$ (resp. $\mathcal{P} \left( \X \right)$) converges weakly in $\mathcal{P}(\Omega)$ (resp. $\mathcal{P}(\X)$) to a measure $\mu \in \mathcal{P} \left( \Omega \right)$ (resp. $\mathcal{P} \left( \X \right)$) if, for any function $f \in C_b\left( \Omega \right)$ (resp. $f \in C_b\left( \X \right)$),
    \begin{equation*}
        \left\langle f \right\rangle_{\mu_n} \underset{n \to \infty}{\longrightarrow} \left\langle f \right\rangle_{\mu}.
    \end{equation*}
We remark that, for any finite subset $\Lambda \subseteq \Zd$ and any realization of the random field, one can see the measure $\mu_{\Lambda}^\eta$ as an element of the set $\mathcal{P} \left(\Omega\right)$ by extending the field $\phi$ by the value $0$ outside the set $\Lambda$. Similarly, if one wishes to study the discrete gradient of the random surface, one can see the measure $\mu_{\Lambda}^\eta$ as an element of the set $\mathcal{P} \left(\X\right)$ by considering the discrete gradient of the random surface and extending it to be equal to $0$ outside the set $\Lambda$. We may specify a boundary condition on the law of the disordered random surface as follows. For any $\psi : \Zd  \to \R$, $\eta :\Zd \to \R$ and $\Lambda \subseteq \Zd$ finite, let $\mu_\Lambda^{\eta , \psi} \in \mathcal{P}(\Omega)$ be defined by
\begin{equation*}
    \mu_\Lambda^{\eta , \psi} (\di \phi) := \frac{1}{Z_\Lambda^\eta}\exp \left( - H_{\Lambda}^{\eta} (\phi) \right) \prod_{v \in \Lambda} d \phi(v) \prod_{v \in \Zd \setminus \Lambda} \delta_{\psi(v)}(d \phi(v)),
\end{equation*}
where the symbol $\delta_x$ refers to the Dirac measure at the point $x \in \R$.

The first theorem of this article establishes that, for almost every realization of the random field, the sequence of measures $(\mu_{\Lambda_L}^\eta)$ converges weakly in $\mathcal{P}\left( \X \right)$ as $L$ tends to infinity in dimensions $d \geq 4$.

\begin{theorem}[Infinite-volume translation-covariant gradient Gibbs states in $d \geq 4$] \label{infinitevolgradgibbs}
Suppose $d \geq 4$, let $\lambda >0$ and assume that $(\eta(x))_{x \in \Zd}$ are independent with $\E \left[ \eta(x)^2 \right] =1$ and a symmetric distribution. Then, for almost every realization of the disorder $\eta$, the sequence of measures $(\mu_{\Lambda_L}^\eta)_{L \geq 0}$ converges weakly in~$\mathcal{P} \left( \X \right)$ to a measure $\mu^\eta_{\nabla} \in \mathcal{P} \left( \X \right)$ which satisfies the following properties:
\begin{itemize}
    \item \textit{Quantitative convergence:} There exist an exponent $\alpha > 0$ depending on the dimension $d$ and the ellipticity constants $c_-, c_+$ and a constant $C$ depending on the parameters $d , c_+ , c_-$ and $\lambda$ such that, for any side length $L \geq 1$,
    \begin{equation} \label{quantWassercoupling}
        \E \left[ \inf_{\kappa^\eta \in \Gamma_\nabla \left( \mu^\eta_{\Lambda_L} , \mu^\eta_{\nabla} \right)}\int_{\X \times \X} \sup_{e \in E(\Lambda_{L/2})}  \left|\chi_0(e) - \chi_1(e) \right|^2 \kappa^\eta \left( d\chi_0 , d \chi_1 \right) \right] \leq C L^{- \alpha}.
   \end{equation}
    \item \textit{Translation covariance:} For any vertex $y \in \Zd$ and almost every realization of the random field, we have
    \begin{equation} \label{translation covaraince grad Gibbs}
        \mu^{\tau_y \eta}_\nabla  = \left(\tau_y\right)_* \mu^{ \eta}_\nabla.
    \end{equation}
   \item \textit{Dobrushin–Lanford–Ruelle (DLR) equation:} For any finite set $\Lambda \subseteq \Zd$, any function $f \in C_b(\X)$ and almost every realization of the disorder, one has the identity
    \begin{equation} \label{eq:115211044}
        \int_{\X} \mu^{\eta}_\nabla(d \chi) \int_\X f(\nabla \phi) \mu_{\Lambda}^{\eta, \psi(\chi)}(d \phi)= \int_\X f(\chi) \mu^\eta_\nabla(d \chi) ,
    \end{equation}
    where $\psi(\chi)$ is any function whose discrete gradient is equal to $\chi$.
\end{itemize}
\end{theorem}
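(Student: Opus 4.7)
The strategy is to establish the quantitative Cauchy bound~\eqref{quantWassercoupling} for arbitrary pairs $\mu_{\Lambda_L}^\eta, \mu_{\Lambda_M}^\eta$ with $M \geq L$, and then deduce each listed property by passing to the limit $M \to \infty$. Concretely, for every $M \geq L$, I plan to construct a coupling $\kappa^\eta_{L,M} \in \Gamma_\nabla(\mu_{\Lambda_L}^\eta, \mu_{\Lambda_M}^\eta)$ satisfying
\begin{equation*}
\E\left[\int_{\X \times \X} \sup_{e \in E(\Lambda_{L/2})} |\chi_0(e) - \chi_1(e)|^2 \,\di \kappa^\eta_{L,M}(\chi_0, \chi_1)\right] \leq C L^{-\alpha}.
\end{equation*}
A standard subsequence/Borel--Cantelli argument then upgrades this $L^1(\P)$-Cauchy bound to almost-sure convergence of the full sequence $(\mu_{\Lambda_L}^\eta)_{L \geq 0}$ in the complete Wasserstein-type metric on $\mathcal{P}(\X)$ associated with the displayed functional, and identifies the limit with $\mu^\eta_\nabla$.

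For the coupling itself, I use the DLR identity at scale $L$: under $\mu_{\Lambda_M}^\eta$, the conditional law of $\phi$ on $\Lambda_L$ given $\psi := \phi|_{\Zd \setminus \Lambda_L}$ is $\mu_{\Lambda_L}^{\eta, \psi}$. It therefore suffices to couple $\mu_{\Lambda_L}^{\eta, 0}=\mu_{\Lambda_L}^\eta$ and $\mu_{\Lambda_L}^{\eta, \psi}$ for a typical $\psi$ sampled under $\mu_{\Lambda_M}^\eta$. This I would do by a \emph{Langevin coupling with common noise}: letting $(B_x)_{x \in \Zd}$ be i.i.d.~standard Brownian motions, run
\begin{equation*}
\di \phi_0^t(x) = -\partial_{\phi(x)} H_{\Lambda_L}^\eta(\phi_0^t)\,\di t + \sqrt{2}\,\di B_x(t),\qquad x \in \Lambda_L,
\end{equation*}
with Dirichlet data $\phi_0^t \equiv 0$ on $\partial \Lambda_L$, and the analogous SDE for $\phi_\psi^t$ with Dirichlet data $\psi$. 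The common noise and the linear $\lambda\eta$ source cancel in the difference $v^t := \phi_\psi^t - \phi_0^t$, and uniform convexity of $V$ yields the pathwise divergence-form parabolic equation
\begin{equation*}
\partial_t v^t(x) = -\divg\!\left(\a^t \nabla v^t\right)(x),\qquad \a^t(e) := \int_0^1 V''\!\left(s \nabla \phi_\psi^t(e) + (1-s)\nabla \phi_0^t(e)\right)\di s \in [c_-, c_+],
\end{equation*}
with Dirichlet data $v^t = \psi$ on $\partial \Lambda_L$. Taking $t \to \infty$ produces the stationary coupling $\kappa^\eta_{L,M}$, with $v^\infty$ depending on $\psi$ in a manner controlled purely by the random environment $\a^\infty$.

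To bound $|\nabla v^\infty|$ on $E(\Lambda_{L/2})$ I would then combine two ingredients. First, a discrete interior regularity estimate for the divergence-form operator $-\divg(\a\nabla \cdot)$, uniform over realizations of $\a$ with ellipticity in $[c_-, c_+]$ (Caccioppoli / De Giorgi--Nash--Moser plus Green's function decay), which turns an $L^2$ bound on $\psi|_{\partial \Lambda_L}$ into a pointwise gradient bound on $\Lambda_{L/2}$ with a polynomial gain in $L^{-1}$. Second, an $L^2$ bound on $\psi = \phi_M^\infty|_{\partial \Lambda_L}$, obtained by splitting $\phi_M^\infty(x) = m_M(x, \eta) + \tilde\phi_M(x)$ with $m_M(x,\eta) := \left\langle \phi(x)\right\rangle_{\mu_{\Lambda_M}^\eta}$. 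The fluctuation $\tilde \phi_M$ has $\var_{\mu_{\Lambda_M}^\eta}(\phi(x)) = O(1)$ for $d \geq 3$ by Brascamp--Lieb. The quenched mean $m_M$ is analyzed via the Helffer--Sjöstrand identity $\partial m_M(x)/\partial \eta(y) = \lambda\,\cov_{\mu_{\Lambda_M}^\eta}(\phi(x), \phi(y))$, together with Efron--Stein (which applies because the symmetry of $\eta$ and of $V$ forces $\E[m_M(x)] = 0$) and the Brascamp--Lieb covariance bound $|\cov_{\mu_{\Lambda_M}^\eta}(\phi(x), \phi(y))| \leq c\,G_{\Lambda_M}(x,y)$, giving $\E[m_M(x)^2] \lesssim \sum_y G_{\Lambda_M}(x,y)^2$, of order $L^{4-d}$ for $d \leq 4$ and $O(1)$ for $d \geq 5$. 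Balancing the boundary size against the interior gain yields the rate $L^{-\alpha}$, and $d \geq 4$ is the smallest dimension in which this balance is strictly positive.

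\textbf{Main obstacle and closing steps.} The technical heart is the pointwise interior gradient estimate for the random, non-translation-invariant operator $-\divg(\a\nabla\cdot)$ whose coefficients depend on the entire joint trajectory $(\phi_0^t, \phi_\psi^t, \eta)$: a plain Caccioppoli $L^2 \to L^2$ bound is insufficient to beat the polynomial growth of the boundary data, and one needs either annealed Green's function decay or a discrete De Giorgi--Nash--Moser gradient estimate that is uniform in all realizations of $\a$ satisfying~\eqref{eq:V ellipticity}. Once~\eqref{quantWassercoupling} is proved, translation covariance~\eqref{translation covaraince grad Gibbs} follows from the exact identity $\mu_{\Lambda_L}^{\tau_y \eta} = (\tau_y)_* \mu_{\Lambda_L + y}^\eta$ together with convergence of both sides as $L \to \infty$, and the DLR equation~\eqref{eq:115211044} is obtained by writing the tautological DLR identity for $\mu_{\Lambda_M}^\eta$ with $\Lambda \subseteq \Lambda_M$, then letting $M \to \infty$ and using the quantitative coupling together with continuity of $\psi \mapsto \mu_\Lambda^{\eta, \psi}$ (itself a consequence of the uniform convexity of $V$) to pass to the limit on both sides against the bounded continuous gradient observable $f$.
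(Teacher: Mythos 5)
Your construction via the Langevin coupling with common noise and De Giorgi--Nash--Moser is the right mechanism and matches the paper's; where you differ is in the \emph{decomposition}. The paper couples $\mu_{\Lambda_0}^\eta$ and $\mu_{\Lambda_1}^\eta$ directly by running two zero-boundary Langevin dynamics on the full boxes with a common noise, starting from their respective stationary laws and evaluating at time $L^2$; the interior parabolic equation for the difference $w = \phi_0 - \phi_1$ then lives in $\Lambda_0\cap\Lambda_1$ with no boundary data to track, and the $L^2$-average of $w$ is estimated directly by $\E\big[\langle\phi(x)^2\rangle\big]$. You instead condition $\mu_{\Lambda_M}^\eta$ on the external configuration $\psi$ by DLR, and couple the two single-box dynamics on $\Lambda_L$ with boundary data $0$ and $\psi$. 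The two roads feed the same input into DGNM. (Two small corrections to yours: the difference equation should read $\partial_t v - \nabla\cdot\a\nabla v = 0$, not $\partial_t v = -\mathrm{div}(\a\nabla v)$; and ``take $t\to\infty$'' is cleaner replaced by ``start from stationary data and evaluate at time $L^2$,'' as the paper does, which avoids having to justify convergence of the coupling as $t\to\infty$.)

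There is, however, a genuine gap in $d=4$. You claim a Cauchy bound $\E\big[W_{\X}(\mu_{\Lambda_L}^\eta,\mu_{\Lambda_M}^\eta)^2\big] \leq C L^{-\alpha}$ \emph{uniformly in $M\geq L$}, and justify the boundary-data estimate by asserting that $\E[m_M(x)^2]\lesssim \sum_y G_{\Lambda_M}(x,y)^2$ is ``of order $L^{4-d}$ for $d\leq 4$.'' That scale is wrong: the Green's function sum is controlled by the \emph{outer} box $\Lambda_M$, not $\Lambda_L$, and for $x$ in the bulk of $\Lambda_M$ it is of order $M^{4-d}$ for $d<4$ and $\ln M$ for $d=4$ (this is Lemma~\ref{2.111825}, derived from~\cite[Theorem 2]{DHP20++}). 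In $d=4$ the $L^2$-norm of $\psi$ on $\partial\Lambda_L$ therefore grows like $\sqrt{\ln M}$ as $M\to\infty$, and DGNM only gives a gain of $L^{-\alpha}$; the product $L^{-\alpha}\sqrt{\ln M}$ does not tend to zero uniformly in $M$, so the claimed uniform Cauchy bound is false in $d=4$. This is precisely why Proposition~\ref{p.generalGNMcoupling} restricts to $\Lambda_0\cup\Lambda_1\subseteq\Lambda_{4L}$ (so that $\ln M\leq\ln 4L$ can be absorbed by reducing $\alpha$) and then builds the limit by dyadic iteration $L\to 2L\to 4L\to\cdots$ combined with the gluing lemma and the triangle inequality, rather than by a single jump from $L$ to arbitrary $M$. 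Your argument is fine as stated for $d\geq 5$, where the fluctuation bound is $O(1)$ uniformly in $M$; for $d=4$ you need to replace the one-shot coupling by the paper's scale-by-scale iteration, or at least restrict $M$ to a window $[L, L^{1+c}]$ with $c$ small enough that the logarithm is absorbed. The translation-covariance and DLR steps are otherwise in line with the paper's argument.
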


As mentioned above, the result of Theorem~\ref{infinitevolgradgibbs} should hold in any dimension $d \geq 3$, and the three dimensional case is not covered by our argument (while it is for instance included in the results of~\cite{CK12, CK15}). The technical reason behind is that, as established in~\cite[Theorem 2]{DHP20++}, the fluctuations of the disordered random surface distributed according to the measure~$\mu_{\Lambda_L}^\eta$ are of order $\sqrt{L}$ in three dimensions, while they are of order $\sqrt{\ln L}$ in four dimensions and are bounded in dimensions $5$ and higher. In dimension $d = 3$, these fluctuations are too large to be compensated by the $\alpha$-H\"{o}lder regularity provided by the De Giorgi-Nash-Moser regularity theory (see Proposition~\ref{paraNash}), which only holds for a small exponent $\alpha > 0$.

\medskip

The second theorem of this article states that, for almost every realization of the random field, the sequence of measures $(\mu_{\Lambda_L}^\eta)$ converges weakly in $\mathcal{P}\left( \Omega \right)$ as $L$ tends to infinity, and establishes the existence of infinite-volume translation-covariant Gibbs states in dimensions $d \geq 5$. 

\begin{theorem}[Infinite-volume translation-covariant Gibbs states in $d \geq 5$] \label{transcovgibbsstates}
Suppose $d \geq 5$, let $\lambda >0$ and assume that $(\eta(x))_{x \in \Zd}$ are independent with $\E \left[ \eta(x)^2 \right] =1$ and a symmetric distribution. Then, for almost every realization of the disorder $\eta$, the sequence of measures $(\mu_{\Lambda_L}^\eta)_{L \geq 0}$ converges weakly in $\mathcal{P} \left( \Omega \right)$ to a measure $\mu^\eta \in \mathcal{P} \left( \Omega \right)$ which satisfies the following properties:
\begin{itemize}
    \item \textit{Quantitative convergence:} There exists an exponent $\alpha > 0$ depending on the dimension $d$ and the ellipticity constants $c_-, c_+$ and a constant $C$ depending on the parameters $d , c_+ , c_-$ and $\lambda$ such that, for any $L \geq 1$,
    \begin{equation} \label{eq:09471104}
        \sup_{x \in \Lambda_{L/4}} \E \left[ \inf_{\kappa^\eta \in \Gamma \left( \mu^\eta_{\Lambda_L} , \mu^\eta \right)} \int_{\Omega \times \Omega} \left|\phi_0(x) -  \phi_1(x) \right|^2 \kappa^\eta \left( d\phi_0 , d \phi_1 \right) \right] \leq C L^{-\alpha}.
    \end{equation}
    \item \textit{Translation covariance:} For any vertex $y \in \Zd$ and almost every realization of the random field, we have
    \begin{equation} \label{eq:09481104}
        \mu^{\tau_y \eta} = \left(\tau_y\right)_* \mu^{ \eta}.
    \end{equation}
    \item \textit{Dobrushin–Lanford–Ruelle (DLR) equation:} For any finite set $\Lambda \subseteq \Zd$, any function $f \in C_b(\Omega)$ and almost every realization of the random field, one has the identity
    \begin{equation} \label{eq:11521104}
        \int_{\Omega} \mu^{\eta}(d \psi) \int_\Omega f(\phi) \mu_{\Lambda}^{\eta, \psi}(d \phi)= \int_\Omega f( \phi) \mu^\eta (d \phi).
    \end{equation}
\end{itemize}
\end{theorem}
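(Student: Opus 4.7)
The strategy is to leverage the gradient convergence established in Theorem~\ref{infinitevolgradgibbs} and upgrade it to convergence of the surface itself by exploiting the uniform-in-volume boundedness of the height fluctuations in dimensions $d\geq 5$ proved in~\cite{DHP20++}. For $M>L$ I would couple $\mu_{\Lambda_L}^\eta$ and $\mu_{\Lambda_M}^\eta$ by running synchronous Langevin dynamics driven by a common Brownian motion inside $\Lambda_L$. At equilibrium, the coupled difference $w := \phi^{(M)} - \phi^{(L)}$ restricted to $\Lambda_L$ satisfies the discrete divergence-form elliptic equation
\begin{equation*}
-\nabla\cdot\a\,\nabla w = 0 \quad\text{in } \Lambda_L, \qquad w|_{\partial\Lambda_L}=\phi^{(M)}|_{\partial\Lambda_L},
\end{equation*}
where the random coefficient field $\a(e):=\int_0^1 V''\bigl((1-s)\nabla\phi^{(L)}(e)+s\nabla\phi^{(M)}(e)\bigr)\,ds$ is uniformly elliptic by~\eqref{eq:V ellipticity}. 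Writing $w$ via the associated discrete Poisson kernel yields the representation $w(x)=\sum_{y\in\partial\Lambda_L}P^{\a}(x,y)\,\phi^{(M)}(y)$.

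The core of the argument is then to bound $\E\,w(x)^2$ for $x\in\Lambda_{L/4}$ by combining two ingredients: (i) the De Giorgi--Nash--Moser H\"{o}lder regularity encapsulated in Proposition~\ref{paraNash}, applied uniformly in $\a$, which yields $P^{\a}(x,y)\lesssim L^{1-d}$ for bulk $x$ and $y\in\partial\Lambda_L$; and (ii) the uniform-in-$M$ second-moment bound $\E\,\phi^{(M)}(y)^2\leq C$ valid in $d\geq 5$, together with annealed covariance decay of $\phi^{(M)}$ on $\partial\Lambda_L$, both drawn from~\cite[Theorem~2]{DHP20++}. These yield a Cauchy estimate $\E\,w(x)^2\leq CL^{-\alpha}$, uniform in $M\geq 2L$. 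Standard completeness in $L^2(\P)$ applied to a countable separating family of test functions in $C_b(\Omega)$ then provides, $\P$-almost surely, a weak limit $\mu^\eta\in\mathcal{P}(\Omega)$, and~\eqref{eq:09471104} follows upon sending $M\to\infty$.

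The remaining properties follow by standard arguments. Translation covariance~\eqref{eq:09481104} is derived from the finite-volume identity $\mu_{\Lambda_L+y}^{\tau_{-y}\eta}=(\tau_y)_*\mu_{\Lambda_L}^\eta$ together with the sandwich $\Lambda_{L-|y|}\subset\Lambda_L+y\subset\Lambda_{L+|y|}$, uniqueness of the limit, and~\eqref{eq:09471104} to absorb the boundary mismatch as $L\to\infty$. For the DLR equation~\eqref{eq:11521104}, one starts from the finite-volume tautology $\mu_{\Lambda_M}^\eta(f)=\int\mu_{\Lambda_M}^\eta(d\psi)\,\mu_\Lambda^{\eta,\psi}(f)$ valid for any $\Lambda\subset\Lambda_M$ and $f\in C_b(\Omega)$, and passes to the limit $M\to\infty$: the outer integral converges by~\eqref{eq:09471104}, and one verifies continuity of $\psi\mapsto\mu_\Lambda^{\eta,\psi}(f)$ in the boundary data by applying Proposition~\ref{paraNash} once more.

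The principal obstacle is the coupled estimate in the second paragraph. The Nash--Moser exponent $\alpha$ is small and depends on $c_-,c_+,d$, so extracting an explicit power-law rate from the product of the H\"{o}lder bound on $P^{\a}$ and the covariance control on $\phi^{(M)}|_{\partial\Lambda_L}$ requires care. The hypothesis $d\geq 5$ is crucially used at this step: as in the Gaussian toy model, where $\E|\phi^{(M)}(x)-\phi^{(L)}(x)|^2\asymp L^{4-d}$, the rate is positive only when $d\geq 5$, and the present quenched argument for uniformly convex $V$ inherits this scaling with a (possibly smaller) positive power.
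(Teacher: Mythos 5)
Your outline starts from the same Langevin coupling that the paper uses, but the key reduction differs from the paper's and has two real gaps.

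First, the claim that at equilibrium the coupled difference $w := \phi^{(M)} - \phi^{(L)}$ satisfies the \emph{elliptic} equation $-\nabla\cdot\a\nabla w = 0$ is incorrect for the Gibbs measures. The synchronous Langevin coupling gives, for each time $t$, a pair $(\phi^{(L)}_t, \phi^{(M)}_t)$ whose difference solves the \emph{parabolic} equation $\partial_t w - \nabla\cdot\a\nabla w = 0$ on $\Lambda_L$ (exactly as in Proposition~\ref{p.generalGNMcoupling}); the elliptic equation is only satisfied by the ground-state difference $v_{2L,\eta}-v_{L,\eta}$, which the paper uses merely as a heuristic in Section~\ref{section1.3.3}. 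Because of this, the Dirichlet Poisson-kernel representation $w(x) = \sum_{y\in\partial\Lambda_L} P^\a(x,y)\phi^{(M)}(y)$ that drives your whole estimate does not apply. One would have to replace it by a space-time (caloric) Poisson kernel, and the argument would need to be redesigned.

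Second, even granting a pointwise bound $P^\a(x,y)\lesssim L^{1-d}$, your plan to extract a power-law decay from the "annealed covariance decay of $\phi^{(M)}|_{\partial\Lambda_L}$" factorizes $\E\!\left[P^\a(x,y)P^\a(x,z)\phi^{(M)}(y)\phi^{(M)}(z)\right]$ as if $P^\a$ were deterministic or independent of the boundary data. It is neither: $\a$ is built from $\nabla\phi^{(L)}$ and $\nabla\phi^{(M)}$, so the Poisson kernel is strongly correlated with $\phi^{(M)}|_{\partial\Lambda_L}$. With only a pointwise bound on $P^\a$ (no cancellation), the crude estimate gives $L^{1-d}\cdot L^{d-1}=O(1)$, i.e.\ no decay, and no ingredient in your argument actually breaks this correlation. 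Relatedly, the covariance decay you cite is not part of~\cite[Theorem~2]{DHP20++}; the relevant annealed covariance estimate $|\E\langle\phi(x)\phi(y)\rangle|\lesssim |x-y|^{-(d-4)}$ is what the paper itself proves in Lemma~\ref{lemma15061003}, using the thermal covariance bound of~\cite{CK15} (Lemma~\ref{prop.thermalcorrelation}) and the Naddaf--Spencer/Gloria--Otto spectral-gap inequality (Lemma~\ref{prop5.5}).

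The paper avoids both problems by a different decomposition. Rather than representing $w$ via a boundary kernel over $\partial\Lambda_L$, it writes
$\phi(x) = \sum_{e\in\vec E(x+\Lambda_\ell)} \mathbf{f}_{x+\Lambda_\ell}(e)\nabla\phi(e) + |\Lambda_\ell|^{-1}\sum_{y\in x+\Lambda_\ell}\phi(y)$ using the Neumann Green's function of a \emph{small} box $x+\Lambda_\ell$ with $\ell$ a tiny power of $L$ (Proposition~\ref{propB1}). The gradient term is then controlled by the already-established gradient coupling of Proposition~\ref{p.generalGNMcoupling} (parabolic De Giorgi--Nash--Moser plus the height-fluctuation bound of Lemma~\ref{2.111825}), while the spatial average is controlled directly by the decorrelation estimate of Lemma~\ref{lemma15061003}, which decays like $\ell^{-(d-4)/2}$ and is therefore positive only for $d\geq 5$. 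The two errors are then balanced by choosing $\ell=L^{\alpha/(2d)}$. Your final paragraph correctly identifies $d\geq 5$ as the critical threshold and the Gaussian scaling $\E|w(x)|^2\asymp L^{4-d}$ as the target, but the mechanism you propose for achieving that scaling does not close.

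Your treatments of translation covariance and DLR are essentially the same as the paper's (up to a sign convention on the shift), and those parts are fine.
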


We conclude this section by mentioning that a follow-up question to Theorem~\ref{infinitevolgradgibbs} and Theorem~\ref{transcovgibbsstates} is the convergence to a scaling limit for the random-field random surface model. It was pointed out in~\cite[Section 7]{DHP20++} that the ground state of random-field Gaussian free field coincides with the lattice \emph{membrane model}, and that it is natural to expect that the scaling limit is the continuum membrane model.

\subsection{Related results} \label{sec.relresult} 

\subsubsection{Random surfaces without disorder} 

The study of random surfaces (without disorder) was initiated in the 1970s by Brascamp, Lieb and Lebowitz~\cite{BLL75} who obtained sharp localization and delocalization estimates for the random surface under the uniform convexity assumption~\eqref{eq:V ellipticity}. In 1998, Funaki and Spohn~\cite{FS} established that the large-scale limit of the Langevin dynamics associated with the model is a deterministic quantity governed by a parabolic equation taking the form of a mean curvature flow. The scaling limit of the random surface was identified by Naddaf and Spencer~\cite{NS}, and reworked 
probabilistically by Giacomin, Olla and Spohn~\cite{GOS} in order to establish that the fluctuations of the space-time dynamics associated with the model behave like an infinite-dimensional Ornstein-Uhlenbeck process over large scales. We mention that, in the earlier work~\cite{BY}, Brydges and Yau established, among other results, a statement similar to the one of~\cite{NS} in a perturbative setting by using a renormalization group argument. In~\cite{DGI00}, Deuschel, Giacomin and Ioffe obtained large deviation estimates and concentration inequalities for the model. Sharp decorrelation estimates for the surface and its discrete gradient were established by Delmotte and Deuschel~\cite{DD05}. More recently, the techniques of~\cite{NS} were generalized by Miller~\cite{Mil} to finite-volume measures with general tilt. They were also combined with techniques of quantitative stochastic homogenization by Armstrong and Wu~\cite{AW19} to obtain the $C^2$ regularity of the surface tension of the $\nabla \phi$-model. 

The case when the potential $V$ is not (uniformly) convex may exhibit a different phenomenology, and a first-order phase transition from uniqueness to non-uniqueness of infinite-volume (gradient) Gibbs measure (with zero tilt) may be observed. In~\cite{BK07}, Biskup and Koteck{\'y} studied a class of models with potentials of the form
\begin{equation} \label{eq:1904}
    e^{-V_e (\chi)} = p e^{- \kappa_e \chi^2} +  (1-p) e^{- \kappa_e' \chi^2} ~\mbox{with}~ \kappa_e, \kappa_e' > 0, \, p\in [0 , 1] 
\end{equation}
and established the coexistence of infinite-volume, shift-ergodic gradient Gibbs measures when the parameters $\kappa_e, \kappa_e', p$ are chosen suitably. We also refer to~\cite{van2002first, van2005provable} where a related phenomenon was observed for spin systems with continuous symmetry and sufficiently nonlinear interaction potential. In~\cite{BS11}, Biskup and Spohn proved that, for a general category of non-convex potentials including~\eqref{eq:1904}, the scaling limit of the model is a Gaussian free field. Cotar and Deuschel~\cite{CD12} then proved the uniqueness of ergodic Gibbs measures, obtained sharp estimates on the decay of covariance and identified the scaling limit for a class of random surfaces with non-convex potentials. In~\cite{CDM09}, Cotar, Deuschel and M\"{u}ller established the strict convexity of the surface tension of the $\nabla \phi$-model with a non-convex potential which is a perturbation of a uniformly-convex potential. The uniform convexity of the surface tension for a class of random surfaces with non-convex potentials was obtained by Adams, Kotecky and M\"{u}ller~\cite{AKM16} through a renormalization group argument. Further details on random surfaces without external field can be found in the surveys of Funaki~\cite{F05}, Sheffield~\cite{Sh} and Velenik~\cite{V06}.

\smallskip

\subsubsection{Disordered random surfaces} \label{section1.2.2}

In their seminal work~\cite{IM75}, Imry and Ma predicted that the first-order phase transitions in low-dimensional spin systems are rounded upon the addition of a quenched random external field. Their predictions were confirmed for a large class of spin systems by Aizenman and Wehr~\cite{AW1989, AW89}, and recently quantified in a series of works in the case of the two-dimensional random-field Ising model~\cite{C18, AP19, ding20192exponential, AHP20, DW2020}. While the phenomenon has mostly been investigated in the context of compact spin spaces, a related effect occurs for the $\nabla \phi$-model when a quenched random field is suitably added to the system. 

Motivated by disordered spin systems, Bovier and K\"{u}lske~\cite{BK94, BK96} studied the effect of the addition of a $(d+1)$-dimensional disorder to the SOS-model for integer-valued random surfaces. Specifically, the disordered Hamiltonian they considered takes the following form:
\begin{equation*}
    H^\eta (\phi) := \sum_{x \sim y} |\phi(x) - \phi(y)| + \lambda \sum_{x , k} \eta_k (x) \indc_{\{\phi(x) = k \}},
\end{equation*}
for integer-valued surfaces $\phi: \Lambda \to \Z$ and where the sum is considered over the vertices $x \in \Zd$ and the integers $k \in \Z$, and the random variables $(\eta_k (x))_{k , x}$ are i.i.d. with mean zero and unit variance. Based on the renormalization group argument for the three dimensional random-field Ising model~\cite{BK88}, they established in~\cite{BK94} the existence of infinite-volume Gibbs measures in dimensions $d \geq 3$ when the strength of the external field and the temperature are small enough, and, by adapting the strategy of Aizenman and Wehr~\cite{AW89, AW1989}, they proved that infinite-volume, translation-covariant Gibbs states cannot exist for the model in dimensions $d \leq 2$ for any temperature and any strength of the random field.

Another class of random surfaces for which the phenomenon is observed is the model~\eqref{eq:defmuLeta} which was first studied by van Enter and K\"{u}lske~\cite{VK08}. They established that, contrary to the pure model (with $\eta \equiv 0$), infinite-volume gradient Gibbs measures cannot exist in two dimensions when the potential $V$ is even, continuously differentiable and has super-linear growth. They additionally obtained a quantitative estimate on the decay of correlation for infinite-volume translation-covariant gradient Gibbs measures in three dimensions.

In the article~\cite{CK12}, Cotar and K\"{u}lske studied two models of disordered random surfaces. The first one is the model~\eqref{eq:defmuLeta}. In the second one, the disorder is added to the model through the interacting potential as follows: they consider a collection of independent and identically distributed functions $\left( V_e \right)_{e \in E(\Zd)}$ satisfying suitable growth condition and study the random surface whose distribution is formally given by
\begin{equation} \label{Model B}
    \mu := \exp \left( - \sum_{\substack{e = (x, y)}} V_e(\phi(x) - \phi(y)) \right)/Z.
\end{equation}
In this setting, they investigate the question of the existence of infinite-volume, translation-covariant gradient Gibbs states for the model. They prove that such measures exist when the external field satisfies $\E \left[ \eta(x)\right] = 0$ in dimensions $d \geq 3$ for~\eqref{eq:defmuLeta} and in dimensions $d \geq 1$ for~\eqref{Model B}, as well as the existence of an infinite-volume surface tension under the same assumptions. They additionally establish that the infinite-volume surface tension does not exist for~\eqref{eq:defmuLeta} in dimensions $d \leq 2$, and that none of these quantities exist if the condition $\E \left[ \eta(x)\right] = 0$ is not satisfied.

Their result is closely related to Theorem~\ref{infinitevolgradgibbs}, but the techniques implemented in the two articles rely on different strategies. The article~\cite{CK12} uses surface tension bounds to obtain tightness of a suitable sequence of spatially averaged finite-volume gradient Gibbs measures. The techniques developed in the proof of Theorem~\ref{infinitevolgradgibbs} are based on a coupling argument for the Langevin dynamics associated with the model (initially introduced by Funaki and Spohn~\cite{FS}), together regularity estimates and upper bounds on the fluctuations of the random surface.

In the article~\cite{CK15}, Cotar and K\"{u}lske proved that infinite-volume translation-covariant and ergodic
gradient Gibbs states are unique when the tilt of the interface is specified in any dimension $d\geq 1$ for the model~\eqref{Model B}, and in any dimension $d \geq 3$ when the distribution of the disorder is symmetric for the model~\eqref{eq:defmuLeta}. They additionally obtained sharp decorrelation estimates for general functionals of the discrete gradient of the field for both models. We mention that some of the tools needed to prove the result are used in the proof of Theorem~\ref{transcovgibbsstates} (see Lemma~\ref{prop.thermalcorrelation} and Lemma~\ref{prop5.5} below).

Regarding the height of the random surface, K\"{u}lske and Orlandi~\cite{KO06} considered the model~\eqref{eq:defmuLeta} in two dimensions when the domain $\Lambda$ is a box. They establish that the typical height of the random surface at the center of the box is at least logarithmic in the side length of the box when the potential $V$ grows super-linearly for any realization of the disorder. In~\cite{KO08}, K\"{u}lske and Orlandi established that, when the potential $V$ has super linear growth and satisfies $V'' \leq C$, the random surface is delocalized (quantitatively) in two dimensions even when the interface is subjected to a strong $\delta$-pinning. They additionally obtained a lower bound on the height of the interface in dimensions $d \geq 3$ (exhibiting localization this time) as well as a lower bound on the fraction of pinned sites.

In the article~\cite{DHP20++}, the author, Harel and Peled obtained lower and upper bounds for the typical height of the random surface~\eqref{eq:defmuLeta} under the convexity assumption~\eqref{eq:V ellipticity}, exhibiting delocalization in dimensions $d \leq 4$ and localization in dimensions $d \geq 5.$

\subsection{Strategy of the proofs} \label{sec.strat}

In this section, we present of the main arguments developed in this article.

\subsubsection{Translation-covariant gradient Gibbs states} \label{section1.3.3}

To highlight the main ideas of the proof of Theorem~\ref{infinitevolgradgibbs}, we only describe the strategy in the case of the ground state of the model defined to be the minimizer of the variational problem
\begin{equation*}
    \inf_{\substack{v : \Lambda_L^+ \to \R \\ v = 0 \,\mathrm{on}\, \partial \Lambda_L}} \sum_{\substack{x , y \in \Lambda_L^+ \\ x \sim y}} V \left( v(y) - v(x) \right) - \lambda \sum_{x \in \Lambda_L} \eta(x) v(x).
\end{equation*}
We denote by $v_{L,\eta}$ the unique minimizer; it can be equivalently characterized as the solution of the discrete nonlinear elliptic equation
\begin{equation} \label{def.groundstate1706}
    \left\{ \begin{aligned}
    - \sum_{ y\sim x} V' \left( v_{L , \eta}(y)  - v_{L , \eta}(x) \right) &= \lambda \eta(x) &~\mbox{for}~x \in \Lambda_L, \\
    v_{L,\eta}(x) &= 0 &~\mbox{for}~x \in \partial \Lambda_L.
    \end{aligned}
    \right.
\end{equation}
In this setting, the question of the weak convergence of the disordered Gibbs measure $\mu^\eta_{\Lambda_L}$ (Theorem~\ref{infinitevolgradgibbs}) can be reformulated as follows: for almost every realization of the disorder, the sequence of functions $(\nabla v_{L , \eta})_{L \geq 0}$ converges pointwise as $L$ tends to infinity. The argument relies on the observation that, for any fixed realization of the disorder $\eta$, the difference $w_L := v_{2L , \eta} - v_{L , \eta}$ solves \emph{the uniformly elliptic linear equation} (see~\eqref{eq:11330408})
\begin{equation*}
    - \nabla \cdot \a \nabla w_L = 0 ~\mbox{in}~ \Lambda_L,
\end{equation*}
where the
environment $\a : E(\Lambda_L) \to \R$ is explicit, depends on the functions $v_{\eta, L}$ and $V$, and satisfies the uniform ellipticity estimates $c_- \leq \a \leq c_+$ (see~\eqref{def.environmenta}). The De Giorgi-Nash-Moser regularity estimate in the discrete setting (the version below is an immediate consequence of~\cite[Proposition 5.3 and Proposition 6.2]{delmotte1997inegalite}) implies that there exists an exponent $\alpha > 0$ such that
\begin{equation}
    \sup_{\substack{x , y \in \Lambda_{L/2} \\x \neq y}} \frac{\left| w_{L , \eta}(x) - w_{L , \eta}(x) \right|}{|x - y|^\alpha} \leq \frac{C}{L^\alpha} \left( \frac{1}{\left| \Lambda_L \right|} \sum_{x \in \Lambda_L} w_{L , \eta}(x)^2 \right)^\frac12,
\end{equation}
and thus, by restricting the supremum to the pairs of neighboring vertices,
\begin{equation} \label{eq:15270104}
    \sup_{e \in E \left( \Lambda_{L/2} \right)} \left| \nabla w_{L , \eta}(e) \right| \leq \frac{C}{L^\alpha} \left( \frac{1}{\left| \Lambda_L \right|} \sum_{x \in \Lambda_L} w_{L , \eta}(x)^2 \right)^\frac12.
\end{equation}
Using the results on the fluctuations of the random surface in the presence of a random field established in~\cite[Theorem 2]{DHP20++} and stated in Proposition~\ref{prop3.1009100bis} below (the result stated below only covers the case of the $\nabla \phi$-model, but its extension to the ground state follows from~\cite[Theorem 2]{DHP20++}), we can estimate the term in the right-hand side of~\eqref{eq:15270104} and obtain
\begin{equation} \label{eq:174804504}
    \E \left[ \sup_{e \in E \left(\Lambda_{L/2}\right)}\left| \nabla v_{2L,\eta}(e) - \nabla v_{L,\eta}(e) \right| \right]  \leq \frac{C}{L^\alpha} \left(\frac{1}{\left| \Lambda_L \right|} \sum_{x \in \Lambda_L} \E \left[ \left| w_{L , \eta}(x)\right|^2 \right]\right)^\frac 12 \leq \frac{C}{L^\alpha}.
\end{equation}
The inequality~\eqref{eq:174804504} implies that the sequence $ \nabla v_{2^{l},\eta}(e)$ converges almost surely by summing over the scales. 

The extension of the result from the ground state to the disordered Gibbs measure $\mu^\eta_{\Lambda_L}$ (as stated in Theorem~\ref{infinitevolgradgibbs}) is achieved by extending the argument from the elliptic setting to the parabolic setting, using the Langevin dynamics associated with the model (see Section~\ref{sectionLangevin}) and the notion of Wasserstein metric (see Section~\ref{sec.Wasserstein}) to estimate the distance between two probability distributions.

\subsubsection{Translation-covariant Gibbs states} \label{section1.3.4}

As in Section~\ref{section1.3.3}, we only outline the argument in the case of the ground state of the model. In this setting, Theorem~\ref{transcovgibbsstates} can be restated as follows: for almost every realization of the disorder, the sequence of functions $(v_{L , \eta})_{L \geq 0}$ converges pointwise as the side length $L$ tends to infinity. To establish the claim, we use standard results on the gradient of the Neumann Green's function on $\Zd$ (see Appendix~\ref{app.B1}) which imply that, for any $x \in \Lambda_{L/2}$ and any $\ell \in \N$, there exists a map $\mathbf{f}_{x + \Lambda_\ell} : \vec{E} \left(x+\Lambda_\ell\right) \to \R$ which is bounded by a constant depending only on the dimension and satisfies
\begin{align*}
    v_{2L , \eta}(x) - v_{L , \eta}(x) & = \sum_{e \in \vec{E}\left( x + \Lambda_{\ell} \right)} \mathbf{f}_{x+ \Lambda_\ell}(e) \left( \nabla v_{2L , \eta}(e) - \nabla v_{L , \eta}(e) \right) \\ & \quad + \frac{1}{\left| \Lambda_\ell \right|}\sum_{y \in (x + \Lambda_\ell)} \left( v_{2L , \eta}(y) -  v_{L , \eta}(y) \right).
\end{align*}
By choosing the side length $\ell$ to be a suitable power of $L$, we may use~\eqref{eq:174804504} to estimate the first term in the right-hand side, and a decorrelation estimate (see Section~\ref{section4.1}) to establish that the spatial averages of the functions $v_{2L , \eta}$ and $v_{L , \eta}$ over the cube $x + \Lambda_\ell$ are small in dimensions $d \geq 5$. Combining the different arguments, we obtain that there exists an exponent $\alpha >0$ such that, for any vertex $x \in \Lambda_{L/4}$,
\begin{equation*}
    \E \left[ \left| v_{2L , \eta}(x) - v_{L , \eta}(x) \right| \right] \leq \frac{C}{L^\alpha}.
\end{equation*}
The rest of the argument is then similar to the one of Theorem~\ref{infinitevolgradgibbs} presented in Section~\ref{section1.3.3}.

\subsection{Organisation of the article}
The rest of the article is organized as follows. Section~\ref{section2} introduces some additional notations and preliminary results used in the proofs of Theorem~\ref{infinitevolgradgibbs} and Theorem~\ref{transcovgibbsstates}. Section~\ref{Section5.1} and Section~\ref{Section4} contain the proofs of Theorem~\ref{infinitevolgradgibbs} and Theorem~\ref{transcovgibbsstates} respectively. Appendix~\ref{app.B1} contains the proof of a technical (and standard) result pertaining to the gradient of the Neumann Green's function used in the proof of Theorem~\ref{infinitevolgradgibbs}. Appendix~\ref{app.B2} builds upon standard results from the theory of elliptic regularity and contains the proof of the version of the De Giorgi-Nash-Moser regularity estimate used in the article.

\subsection{Convention for constants} Throughout this article, the symbols $C$ and $c$ denote positive constants which may vary from line to line, with $C$ increasing and $c$ decreasing. The symbol $\alpha$ denotes a positive exponent which may vary from line to line. These constants and exponents may depend only on the dimension $d$, the ellipticity constants $c_+$ and $c_-$ and the intensity $\lambda$ of the random field. We specify the dependency of the constants and exponents by writing, for instance, $C := C(d , c_+)$ to mean that the constant $C$ depends on the parameters $d$ and $c_+$. We simply write $C$ to mean $C := C(d , c_+ , c_- , \lambda)$.

\subsubsection*{Acknowledgements}
The research of the author was supported in part by Israel Science Foundation grants 861/15  and  1971/19  and  by  the  European  Research  Council starting  grant 678520 (LocalOrder). The author would like to thank the anonymous referees who provided useful and detailed comments on an earlier version of the manuscript, as well as for suggesting a simplification in the proofs of the estimates~\eqref{quantWassercoupling} and~\eqref{eq:09471104}.

\section{Notation and preliminary results} \label{section2}

\subsection{Notation} \label{section2.1} In this section, we introduce some general notations pertaining to lattices, discrete functions and probability distributions.

\subsubsection{General notation}  \label{section2.1general}
We consider the hypercubic lattice $\Zd$ in dimension $d \geq 4$ and denote by $\vec{E} (\Zd)$ (resp. $E (\Zd)$) its set of directed (resp. undirected) edges. Given a directed edge $e \in \vec{E} \left( \Zd\right)$, we denote by $x_e$ and $y_e$ the first and second endpoint of $e$. We denote by $\left|  \cdot  \right|$ the Euclidean norm on $\Zd$ and set $\left|  \cdot \right|_+ := \max(\left|  \cdot  \right| , 1)$. We write $x\sim y$ to denote that $\{x,y\}\in E(\Zd)$. We identify a subset $\Lambda\subset \Zd$ with the induced subgraph of $\Zd$ on $\Lambda$ and, in particular, denote by $\vec{E} \left( \Lambda \right)$ and $E \left( \Lambda \right)$ the sets of directed and undirected edges of $\Lambda$. We denote by $\partial \Lambda$ the \emph{external vertex boundary} of $\Lambda$, that is,
\begin{equation*}
    \partial \Lambda := \left\{ x \in \Zd \setminus \Lambda \, \colon \, \exists y \in \Lambda, y \sim x  \right\}.
\end{equation*}
We define $\Lambda^+:=\Lambda\cup\partial\Lambda$ and let $\left| \Lambda \right|$ be the cardinality of $\Lambda$. We write $\Lambda_L := \{-L, \ldots, L\}^d\subset\Z^d$ for any integer $L \ge 0$. This is extended to all $L \in [0 , \infty)$ by setting $\Lambda_L:=\Lambda_{\lfloor L\rfloor}$, where $\lfloor L\rfloor$ denotes the floor of $L$. Given a point $x \in \Zd$, we denote by $x + \Lambda_L := \left\{ y \in \Zd \, : \, y - x \in \Lambda_L \right\}$. A box $\Lambda$ is then a subset of $\Zd$ of the form $\Lambda := x + \Lambda_L$, for $x \in \Zd$ and $L \in \N$.

Given a subset $\Lambda \subseteq \Zd$ and a function $\phi : \Lambda \to \R$, we define the discrete gradient
$\nabla \phi(e) := \phi(y) - \phi(x)$ for each directed edge $e = (x , y) \in \vec{E}(\Lambda)$.
We extend the definition to \emph{time-dependent} functions $\phi : [t_- , t_+] \times \Lambda \to \R$ by setting
$\nabla \phi(t , e) := \phi(t , y) - \phi(t , x)$ for directed edges $e = (x , y) \in \vec{E}(G)$ and times $t\in [t_- , t_+]$.
In expressions which do not depend on the orientation of the edge, such as $|\nabla\phi(e)|^2$ or $V(\nabla\phi(e))$, we allow the edge $e$ to be undirected. Given a vertex $x \in \Zd$, we use the notation
\begin{equation*}
    \sum_{e \ni x} := \sum_{ e \in \vec{E}(\Zd) , x_e = x}.
\end{equation*}

Given a time-dependent function $\phi: [s , t] \times \Lambda \to \R$, we denote by $\partial_t \phi$ the derivative of $\phi$ with respect to the time variable. Given a vertex $x\in \Zd$ and a function $f$ depending on the random field $\eta$, we denote by $\frac{\partial f}{\partial \eta(x)}$ the derivative of the function $f$ with respect to the variable $\eta(x)$.

\subsubsection{Environments and elliptic operators}
A time-dependent environment is a map $\a : [0 , \infty) \times E(\Lambda) \to [0 , \infty)$. We say that an environment is uniformly elliptic if it satisfies the inequalities $0 < c_- \leq \a \leq c_+ < \infty$ for any $(t , e) \in [0 , \infty] \times E \left(\Zd \right)$. 

Given an environment $\a$, we define the time-dependent elliptic operator $-\nabla \cdot \a \nabla$ by the formula: for a function $\phi : [t_- , t_+] \times \Lambda \to \R$, a vertex $x\in \Lambda$ and a time $t \in [t_- , t_+]$,
\begin{equation} \label{eq:11330408}
    \nabla \cdot \a \nabla \phi(t , x) = \sum_{y \sim x} \a( t , \{x , y \}) (\phi(t , y) - \phi(t , x)).
\end{equation}

\subsubsection{Coupling and Wasserstein metric} \label{sec.Wasserstein}
As in the introduction, we let $\Omega = \R^{\Zd}$ be the space of infinite-volume surfaces. Following the presentation of Funaki--Spohn~\cite{FS}, we introduce the norm
\begin{equation*}
    \left| \phi \right|_1 := \left( \sum_{x \in \Zd} |\phi (x)|^2 e^{- |x|} \right)^{\frac 12}.
\end{equation*}
We denote by $\Omega_1 := \left\{ \phi \in \Omega \, : \, \left| \phi \right|_1 < \infty \right\}$ and let $\mathcal{P}_2 \left( \Omega_1\right)$ be the set of probability measures on $\Omega_1$ with finite second moment. For any pair $\mu_0 , \mu_1 \in \mathcal{P}_2 \left(\Omega_1\right)$, we define the Wasserstein distance between $\mu_0$ and $\mu_1$ by the formula
\begin{equation*}
    W_{\Omega} (\mu_0 , \mu_1) := \inf_{\kappa \in \Gamma(\mu_0 , \mu_1)} \left( \int_{\Omega_1 \times \Omega_1} \left| \phi_0 - \phi_1 \right|_1^2 d\kappa(\phi_0 , \phi_1) \right)^{\frac 12}.
\end{equation*}  
where $\Gamma(\mu_0 , \mu_1)$ denotes the set of probability measures on the space $\Omega_1 \times \Omega_1$ whose first and second marginals are the measures $\mu_0$ and $\mu_1$ respectively.

By \cite[Theorem 6.18]{villani2008optimal}, the space $(\mathcal{P}_2\left(\Omega_1 \right) , W_{\Omega})$ is metric, complete and separable. Additionally, if a sequence of measures $( \mu_n )_{n \geq 0}$ converges to a measure $\mu$ in the metric space $(\mathcal{P}_2\left(\Omega_1\right) , W_{\Omega})$ then it converges weakly in the sense that, for any $f \in C_b (\Omega)$, $\left\langle f \right\rangle_{\mu_n} \underset{n \to \infty}{\longrightarrow} \left\langle f \right\rangle_{\mu}$ (see~\cite[Theorem 6.9]{villani2008optimal}).

We denote by $\X$ the set of gradient fields, that is $$\X := \left\{ \chi : \vec{E}(\Zd) \to \R \, : \, \exists \phi : \Zd \to \R \, : \, \chi = \nabla \phi \right\}.$$
As it was the case in Section~\ref{section2.1general}, in expressions which do not depend on the orientation of the edge, we allow the edge $e$ to be undirected. We introduce the norms
\begin{equation*}
    \left| \chi \right|_1 := \left( \sum_{e \in \vec{E}(\Zd)} |\chi (e)|^2 e^{- |x_e|} \right)^{\frac 12}
\end{equation*}
We denote by $\X_1 := \left\{ \chi  \in \X \, : \, \left| \chi \right|_1 < \infty \right\}$, and by $\mathcal{P}_2\left(\X_1\right)$ the set of probability measures on $\X_1$ with finite second moment. For any pair $\mu_0 , \mu_1 \in \mathcal{P}_2\left(\X_1\right)$, we define the Wasserstein distance between $\mu_0$ and $\mu_1$ according to the formula
\begin{equation} \label{eq:10081104}
    W_{\X} (\mu_0 , \mu_1) := \inf_{\kappa \in \Gamma_\nabla(\mu_0 , \mu_1)} \left( \int_{\X_1 \times \X_1} \left|  \chi_0 -  \chi_1 \right|_1^2 d\kappa(\chi_0 , \chi_1) \right)^{\frac 12}
\end{equation}
It is also clear from the definition of the Wasserstein metric $W_{\X}$ and the norm $\left| \cdot \right|_1$ that, for any $y \in \Zd$ and any pair $\mu_0 , \mu_1 \in \mathcal{P}_2\left(\X_1\right)$,
\begin{equation} \label{eq:16481704}
    W_{\X} \left( \left( \tau_y \right)_* \mu_0 , \left( \tau_y \right)_* \mu_1 \right) \leq C e^{C|y|} W_{\X} \left(  \mu_0 , \mu_1 \right),
\end{equation}
where the notation $\left( \tau_y \right)_* \mu_0$ denotes the pushforward of the measure $\mu_0$ by the shift operator $\tau_y$ as introduced in Section~\ref{section1.11}.

By \cite[Theorem 6.18]{villani2008optimal}, the space $(\mathcal{P}_2\left(\X_1\right) , W_{\X})$ is metric, complete and separable. Additionally, if a sequence of measures $( \mu_n )_{n \geq 0}$ converges to a measure $\mu$ in the metric space $(\mathcal{P}_2\left(\X_1\right) , W_{\X})$ then it converges weakly in the sense that, for any $f \in C_b (\X)$, $\left\langle f \right\rangle_{\mu_n} \underset{n \to \infty}{\longrightarrow} \left\langle f \right\rangle_{\mu}$.

\subsection{Preliminary results} \label{section2.55}

In this section, we collect some tools pertaining to random surfaces and the regularity theory for solutions of parabolic equations which are used in the proofs of Theorem~\ref{infinitevolgradgibbs} and Theorem~\ref{transcovgibbsstates}.

\subsubsection{Langevin dynamics} \label{sectionLangevin}

The Gibbs measure $\mu_{\Lambda_L}^{\eta}$ (defined in~\eqref{eq:defmuLeta}) is naturally associated with the following dynamics.

\begin{definition}[Langevin dynamics]
Consider a box $\Lambda \subseteq \Zd$, a realization of the random field $\eta:\Lambda\to\R$ and let $\left\{ B_t(x) \, : \, x \in \Lambda\right\}$ be a collection of independent standard Brownian motions. The Langevin dynamics $\left\{ \phi_t(x) \, : \, x \in \Lambda\right\}$ is then defined to be the solution of the system of stochastic differential equations
\begin{equation} \label{eq:09262707bis}
    \left\{ \begin{aligned}
    \di \phi_t (x) &= \sum_{e \ni x} V' \left( \nabla \phi_t(e) \right) \di t + \lambda \eta(x)\di t + \sqrt{2}\,\di B_t(x) &&(t ,x) \in  (0 , \infty) \times \Lambda, \\
    \phi_t(x) &= 0 &&(t , x) \in  (0 , \infty) \times \partial \Lambda_L, \\
    \phi_0(x) &= 0 &&x \in \Lambda.
    \end{aligned} \right.
\end{equation}
\end{definition}

The Langevin dynamics~\eqref{eq:09262707bis} is stationary, reversible and ergodic with respect to the Gibbs measure $\mu_{\Lambda}^\eta$.

\subsubsection{De Giorgi-Nash-Moser regularity theory} 
The next statement we collect is the De Giorgi-Nash-Moser regularity estimate (following the works of De Giorgi~\cite{DG1}, Nash~\cite{nash1958continuity} and Moser~\cite{moser1961harnack, moser1964harnack}, see also~\cite[Chapter 8]{GT} and~\cite[Chapter 3]{han2011elliptic}) which states that the solutions of the parabolic equation $\partial_t u - \nabla \cdot \a \nabla u = 0$ are H\"{o}lder continuous when the environment $\a$ is uniformly elliptic. In the discrete setting, the result can be stated as follows.

\begin{proposition}[De Giorgi-Nash-Moser regularity for solutions of parabolic equation] \label{paraNash}
Fix an integer $L \geq 2$ and an environment $\a : [0 , \infty) \times E \left( \Zd\right) \to [0 , \infty)$ satisfying $c_- \leq \a \leq c_+$. Then, there exist an exponent $\alpha := \alpha (d , c_+, c_-) > 0$ and a positive constant $C := C (d , c_+, c_-) < \infty$ such that, for any function $u : (0 , L^2 ) \times \Lambda_L \to \R$ solution of the parabolic equation
\begin{equation*}
    \partial_t u - \nabla \cdot \a \nabla u = 0 \hspace{2mm} \mbox{in} \hspace{2mm} (0 , L^2 ) \times \Lambda_L,
\end{equation*}
one has, for any time $t \in [L^2/2, L^2]$,
\begin{equation} \label{NashParaest}
    \sup_{e \in E\left( \Lambda_{L/2}\right)} \left| \nabla u (t , e) \right| \leq \frac{C}{L^\alpha} \left( \frac{1}{L^{d+2}}\int_{0}^{L^2} \sum_{x \in \Lambda_L}\left| u(t , x) \right|^2 \, dt  \right)^{\frac12}.
\end{equation}
\end{proposition}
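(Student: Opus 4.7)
The plan is to establish a parabolic discrete version of the classical De~Giorgi--Nash--Moser H\"{o}lder continuity estimate, and then to exploit the fact that on the lattice $\Zd$ neighboring vertices are at Euclidean distance $1$, so that the resulting H\"{o}lder seminorm already controls $|\nabla u(t,e)|$ for any $e \in E(\Zd)$. Writing $Q_R(t_0, x_0) := (t_0 - R^2, t_0] \times (x_0 + \Lambda_R)$ for a parabolic cube and $\osc_{Q_R} u := \sup_{Q_R} u - \inf_{Q_R} u$, the core objective is an oscillation-decay inequality
\begin{equation*}
    \osc_{Q_{R/2}} u \leq \theta \, \osc_{Q_R} u, \qquad \theta = \theta(d, c_+, c_-) \in (0,1),
\end{equation*}
which, iterated at dyadic scales, yields the H\"{o}lder estimate
\begin{equation*}
    \osc_{Q_r} u \leq C (r/R)^\alpha \osc_{Q_R} u, \qquad 0 < r \leq R,
\end{equation*}
for some $\alpha = \alpha(d, c_+, c_-) > 0$.

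The main ingredients would be: (i) a discrete parabolic Caccioppoli inequality, obtained by testing the equation against $\chi^2 (u - k)_\pm$ for a space-time cutoff $\chi$ and arbitrary levels $k \in \R$, exploiting $c_- \leq \a \leq c_+$; (ii) the discrete Sobolev inequality on $\Zd$; and (iii) a Moser iteration built from (i) and (ii) yielding the $L^2 \to L^\infty$ bound
\begin{equation*}
    \sup_{Q_{R/2}} |u| \leq C R^{-(d+2)/2} \left( \int_{t_0 - R^2}^{t_0} \sum_{x \in x_0 + \Lambda_R} u(t,x)^2 \, dt \right)^{1/2}.
\end{equation*}
The oscillation decay is then obtained by a De~Giorgi-type iteration on the level sets $\{u > k\}$: if the upper level set at a threshold $k$ near the median of $u$ on $Q_R$ occupies a substantial fraction of $Q_R$, the Caccioppoli--Sobolev iteration shows that $u$ cannot approach $\sup_{Q_R} u$ on $Q_{R/2}$; otherwise the analogous statement applies to $-u$. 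In either case, this produces a definite reduction of the oscillation.

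To conclude~\eqref{NashParaest}, fix $t \in [L^2/2, L^2]$ and an edge $e = \{x,y\} \in E(\Lambda_{L/2})$; the endpoints $x, y$ lie in a parabolic cube of radius $O(1)$ centered near $(t, x)$, so applying the H\"{o}lder estimate with $R$ of order $L$ and $r = O(1)$, combined with the $L^\infty$ bound above applied on the larger cube $Q_L$, gives
\begin{equation*}
    |\nabla u(t,e)| = |u(t,y) - u(t,x)| \leq \frac{C}{L^\alpha} \sup_{Q_L} |u| \leq \frac{C}{L^\alpha} \left( \frac{1}{L^{d+2}} \int_0^{L^2} \sum_{x \in \Lambda_L} u(t,x)^2 \, dt \right)^{1/2}.
\end{equation*}
The main obstacle is that the discrete elliptic regularity results of Delmotte~\cite{delmotte1997inegalite} cited earlier are proved for time-independent operators, whereas here $\a$ depends on $t$; however, the De~Giorgi level-set iteration (as opposed to the Moser--Harnack route) does not rely on symmetry or time-independence of the coefficients and adapts essentially verbatim to our setting, which is the path I would take.
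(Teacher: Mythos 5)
Your proposal is correct and follows the same two-step architecture as the paper's own proof: iterate an oscillation-decay (increase of oscillations) inequality to obtain H\"older decay at scale $L^{-\alpha}$, then convert the resulting oscillation bound into an $L^2$ average via an $L^2\to L^\infty$ mean value inequality, and finally note that on the lattice a gradient is a difference over unit distance so the H\"older seminorm already controls $|\nabla u(t,e)|$. The difference is that you propose to re-derive the two inputs (Caccioppoli inequality, discrete Sobolev, De Giorgi level-set iteration), whereas the paper simply cites them as Propositions 3.2 and 3.4 of Delmotte--Deuschel~\cite{DD05} and immediately combines them. Your concern about time-dependence is legitimate for the elliptic estimate attributed to~\cite{delmotte1997inegalite} in the introduction, but the parabolic inputs used in the actual proof come from~\cite{DD05}, which are stated and proved for time-dependent uniformly elliptic environments, so no re-derivation is needed; citing those two propositions is enough. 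Beyond that, your geometric bookkeeping (choosing $R$ a fixed fraction of $L$ so that the parabolic cube around $(t,x)$ with $t\in[L^2/2,L^2]$ and $x\in\Lambda_{L/2}$ stays inside $(0,L^2)\times\Lambda_L$) matches what the paper does, and the final chain of inequalities is the same.
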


Proposition~\ref{paraNash} is a consequence of two standard results for solutions of parabolic equations: the mean value inequality (see~\cite[Proposition 3.2]{DD05}) and the increase of oscillations (see~\cite[Proposition 3.2]{DD05}). We include a short proof of the result in Appendix~\ref{app.B2} for completeness.

\subsubsection{Fluctuations for random-field random surfaces and the Brascamp-Lieb inequality}

In this section, we record the main result of the article~\cite{DHP20++} which provides an upper bound on the size of the fluctuations of the random surface in the presence of a random-field in dimensions $d \geq 4$. The bound stated in~\eqref{eq:2.81740} is sharp, and a corresponding lower bound is included in the statement of~\cite[Theorem 2]{DHP20++} (as well as upper and lower bounds on the fluctuations of the surface in the low dimensions $d \leq 3$).

\begin{proposition}[Theorem 2 of~\cite{DHP20++}]  \label{prop3.1009100bis}
Fix $\lambda > 0$, assume that $(\eta(x))_{x \in \Zd}$ are independent with $\E \left[ \eta(x)^2 \right] =1$ and a symmetric distribution. Then there exists a constant $C>0$ such that, for any  integer $L \geq 2$ and any $x \in \Lambda _L$,
\begin{equation} \label{eq:2.81740}
    \E \left[ \left\langle \phi(x) \right\rangle_{\mu_{\Lambda_L}^{\eta}}^2 \right] \leq \left\{ \begin{aligned} C \ln L ~\mbox{if}~d=4,\\
     C ~\mbox{if}~d\geq 5. 
    \end{aligned}\right.
\end{equation}
\end{proposition}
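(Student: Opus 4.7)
The plan is to identify $\E[\langle\phi(x)\rangle_{\mu_{\Lambda_L}^\eta}^2]$ with the variance $\var_\eta(\langle\phi(x)\rangle_{\mu_{\Lambda_L}^\eta})$ of the quenched mean and then bound this variance by an Efron--Stein plus Brascamp--Lieb argument. First, using the symmetry assumption on the law of $\eta$ together with the observation that the change of variable $\eta \mapsto -\eta$, $\phi \mapsto -\phi$ preserves the Hamiltonian $H_{\Lambda_L}^\eta$, I would conclude that $\langle\phi(x)\rangle_{\mu_{\Lambda_L}^{-\eta}} = -\langle\phi(x)\rangle_{\mu_{\Lambda_L}^{\eta}}$ and therefore $\E[\langle\phi(x)\rangle_{\mu_{\Lambda_L}^\eta}] = 0$, so that the left-hand side of~\eqref{eq:2.81740} is exactly the variance in $\eta$.

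Next I would compute, for each $y \in \Lambda_L$, the fluctuation-dissipation identity
\begin{equation*}
\frac{\partial \langle\phi(x)\rangle_{\mu_{\Lambda_L}^\eta}}{\partial \eta(y)} = \lambda \,\cov_{\mu_{\Lambda_L}^\eta}(\phi(x),\phi(y)),
\end{equation*}
which follows directly from differentiating $\mu_{\Lambda_L}^\eta$ in the linear parameter $\eta(y)$. Since the Hessian of $H_{\Lambda_L}^\eta$ does not depend on $\eta$ and is bounded below by $c_-$ times the discrete Laplacian, the Brascamp--Lieb inequality yields a \emph{deterministic}, $\eta$-independent upper bound
\begin{equation*}
\cov_{\mu_{\Lambda_L}^\eta}(\phi(x),\phi(y)) \leq c_-^{-1}\, G_{\Lambda_L}(x,y),
\end{equation*}
where $G_{\Lambda_L}$ denotes the Green's function of the simple random walk on $\Zd$ killed on exiting $\Lambda_L$. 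In particular the map $\eta(y) \mapsto \langle\phi(x)\rangle_{\mu_{\Lambda_L}^\eta}$ is Lipschitz in $\eta(y)$ with a deterministic constant $\lambda c_-^{-1} G_{\Lambda_L}(x,y)$, uniformly in all the remaining coordinates.

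I would then apply the Efron--Stein inequality to the independent coordinates $(\eta(y))_{y \in \Lambda_L}$: resampling $\eta(y)$ to an independent copy $\eta'(y)$ and invoking the Lipschitz bound together with $\E[\eta(y)^2] = 1$, one obtains
\begin{equation*}
\var\bigl(\langle\phi(x)\rangle_{\mu_{\Lambda_L}^\eta}\bigr) \leq C \sum_{y \in \Lambda_L} G_{\Lambda_L}(x,y)^2.
\end{equation*}
Finally I would invoke the classical estimate $G_{\Lambda_L}(x,y) \leq C |x-y|_+^{-(d-2)}$ valid for $d \geq 3$, reducing the problem to the elementary sum $\sum_{y \in \Lambda_L} |x-y|_+^{-2(d-2)}$, which is of order $\ln L$ when $d = 4$ and of order $1$ when $d \geq 5$, matching~\eqref{eq:2.81740}.

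The main technical point is securing the $\eta$-uniform Brascamp--Lieb control of the thermal covariance: it is essential here that the disorder enters $H_{\Lambda_L}^\eta$ only as a \emph{linear} term, leaving the Hessian unaffected, so that the log-concavity provided by the uniform ellipticity~\eqref{eq:V ellipticity} transfers to a deterministic Green's-function bound independent of the realization of $\eta$. The dimension $d = 3$ case is not covered by this argument for the simple reason that the sum $\sum_y |x-y|_+^{-2}$ diverges polynomially in $L$, consistent with the delocalization phenomenology mentioned in the surrounding discussion.
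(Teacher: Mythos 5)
Your proof strategy is correct and, notably, coincides almost exactly with the argument the paper itself uses a few pages later to establish Lemma~\ref{lemma15061003}: differentiate the quenched mean in the disorder to obtain the fluctuation-dissipation identity $\partial_{\eta(y)}\langle\phi(x)\rangle_{\mu_{\Lambda_L}^\eta}=\lambda\cov_{\mu_{\Lambda_L}^\eta}(\phi(x),\phi(y))$, bound the thermal covariance deterministically by $C|x-y|_+^{-(d-2)}$, and feed the resulting Lipschitz constants into an Efron--Stein / Gloria--Otto covariance inequality for the independent coordinates $(\eta(y))$. The symmetry trick giving $\E[\langle\phi(x)\rangle_{\mu_{\Lambda_L}^\eta}]=0$ is exactly the one the paper records in the remark following the proposition, and the final Green's-function sum $\sum_y|x-y|_+^{-2(d-2)}$ indeed gives $\ln L$ in $d=4$ and $O(1)$ in $d\geq 5$. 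In the paper this proposition is simply quoted from~\cite{DHP20++}, so you have essentially reconstructed the intended argument.

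One point deserves a correction of attribution. You assert that the \emph{Brascamp--Lieb inequality} yields the entry-wise bound $\cov_{\mu_{\Lambda_L}^\eta}(\phi(x),\phi(y))\leq c_-^{-1}G_{\Lambda_L}(x,y)$. The Brascamp--Lieb inequality controls variances of observables (equivalently, it compares the covariance matrix in the positive-semidefinite order to $A^{-1}$ when $\mathrm{Hess}\,H\geq A$); it does \emph{not} transfer to individual off-diagonal matrix entries, so it cannot by itself produce the $|x-y|^{-(d-2)}$ decay. What does give the deterministic decay is the Helffer--Sj\"ostrand random-walk representation of the covariance combined with Nash--Aronson heat-kernel bounds in the uniformly elliptic dynamic environment $V''(\nabla\phi_t)$, as in Delmotte--Deuschel~\cite{DD05}; this is precisely what the paper invokes via Lemma~\ref{prop.thermalcorrelation} (= Proposition~2.3(ii) of~\cite{CK15}). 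Replacing your Brascamp--Lieb appeal by a citation of that lemma makes the proof airtight; the rest of the argument, including the zero-mean reduction, the Lipschitz bound on $\eta\mapsto\langle\phi(x)\rangle_{\mu_{\Lambda_L}^\eta}$, and the Efron--Stein step, is sound.
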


\begin{remark}
The statement of~\cite[Theorem 2]{DHP20++} estimates the variance over the disorder of the map $\left\langle \phi(x) \right\rangle_{\mu_{\Lambda_L}^{\eta}}$. The assumption that the distribution of the random field is symmetric ensures that $\E [ \left\langle \phi(x) \right\rangle_{\mu_{\Lambda_L}^{\eta}} ] = 0$ for any $x \in \Lambda_L$. The inequality~\eqref{eq:2.81740} can thus be deduced from the upper bound on the variance stated in~\cite[Theorem 2]{DHP20++}.
\end{remark}

Proposition~\ref{prop3.1009100bis} estimates the $L^2$-norm of \emph{the thermal expectation} of the height of the random surface. It is also natural to consider the value of the $L^2$-norm of the height of the surface under the annealed measure $\E\mu_{\Lambda_L}^{\eta}$. This quantity can be decomposed according to the formula
\begin{equation}\label{eq:total variance formula}
   \E\left[\left\langle \phi(x)^2 \right\rangle_{\mu_{\Lambda_L}^{\eta}}\right]  = \E\bigg[\Big\langle \big(\phi(x)  - \left\langle \phi(x) \right\rangle_{\mu_{\Lambda_L}^{\eta}} \big)^2\Big\rangle_{\mu_{\Lambda_L}^{\eta}} \bigg] + \E \left[ \left\langle \phi(x) \right\rangle_{\mu_{\Lambda_L}^{\eta}}^2 \right].
\end{equation}
The second term in the right-hand side is estimated by~\eqref{eq:2.81740}. An upper bound on the first term is provided by the Brascamp-Lieb inequality~\cite{BL76, BL75} which, in our setting, reads as follows: for any dimension $d \geq 3$, any side length $L \geq 1$, any vertex $x \in \Lambda_L$ and any realization of the random field $\eta$,
\begin{equation} \label{eq:11201008}
    \left\langle \left(\phi(x) - \left\langle \phi (x) \right\rangle_{\mu_{\Lambda_L}^{\eta}} \right)^2 \right\rangle_{\mu_{\Lambda_L}^{\eta}} \leq C.
\end{equation}
A combination of Proposition~\ref{prop3.1009100bis} and the inequality~\eqref{eq:11201008} implies the following estimate.

\begin{lemma}\label{2.111825}
Fix $\lambda > 0$, assume that $(\eta(x))_{x \in \Zd}$ are independent with $\E \left[ \eta(x)^2 \right] =1$ and a symmetric distribution. Then there exists a constant $C>0$ such that, for any  integer $L \geq 2$ and any $x \in \Lambda _L$,
    \begin{equation*} 
    \E \left[ \left\langle \phi(x)^2 \right\rangle_{\mu_{\Lambda_L}^{\eta}} \right] \leq \left\{ \begin{aligned} C \ln L ~\mbox{if}~d=4,\\
     C ~\mbox{if}~d\geq 5. 
    \end{aligned}\right.
\end{equation*}
\end{lemma}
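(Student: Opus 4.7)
The plan is to simply combine the two ingredients that the authors have just recorded: the variance decomposition of the annealed thermal variance and the two separate bounds on its pieces. Nothing more should be required, since the statement is manifestly the sum of the bounds from Proposition~\ref{prop3.1009100bis} and the Brascamp--Lieb inequality~\eqref{eq:11201008}.

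Concretely, I would first write down the identity~\eqref{eq:total variance formula}, namely
\begin{equation*}
   \E\left[\left\langle \phi(x)^2 \right\rangle_{\mu_{\Lambda_L}^{\eta}}\right]
   = \E\!\left[\Big\langle \big(\phi(x) - \left\langle \phi(x) \right\rangle_{\mu_{\Lambda_L}^{\eta}}\big)^{2} \Big\rangle_{\mu_{\Lambda_L}^{\eta}}\right]
   + \E\!\left[\left\langle \phi(x) \right\rangle_{\mu_{\Lambda_L}^{\eta}}^{2}\right],
\end{equation*}
which is just the usual total-variance splitting applied with respect to the thermal measure $\mu_{\Lambda_L}^{\eta}$, valid for every fixed realization of $\eta$, and then averaged over $\P$.

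Next I would bound the first term on the right-hand side. The Brascamp--Lieb inequality~\eqref{eq:11201008} applies for every fixed $\eta$ because the Hamiltonian $H_\Lambda^\eta$ in~\eqref{eq:101071608} is a uniformly convex perturbation (by the linear term $-\lambda \sum_x \eta(x) \phi(x)$, which does not affect the second derivative) of a uniformly convex gradient interaction, so the assumption~\eqref{eq:V ellipticity} yields the Brascamp--Lieb bound with a constant $C$ that depends only on $d$, $c_-$, $c_+$ and is \emph{independent of $\eta$ and $L$}. Taking $\P$-expectation then gives
\begin{equation*}
\E\!\left[\Big\langle \big(\phi(x) - \left\langle \phi(x) \right\rangle_{\mu_{\Lambda_L}^{\eta}}\big)^{2} \Big\rangle_{\mu_{\Lambda_L}^{\eta}}\right] \leq C.
\end{equation*}
For the second term I would cite Proposition~\ref{prop3.1009100bis} directly, which gives $C \ln L$ in dimension $d=4$ and $C$ in dimensions $d \geq 5$.

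Adding the two bounds yields the stated estimate, where the $\ln L$ factor (when $d=4$) dominates the constant from Brascamp--Lieb, and in $d \geq 5$ both terms are $O(1)$. There is really no obstacle here: the only subtlety worth flagging is verifying that Brascamp--Lieb is applicable uniformly in $\eta$, which holds because the quenched Hamiltonian differs from a uniformly convex Hamiltonian only by a $\phi$-linear term, leaving the Hessian unchanged and uniformly bounded between $c_-$ and $c_+$ on edges.
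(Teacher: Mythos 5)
Your proposal is correct and is essentially the paper's own argument: the lemma is obtained by combining the total-variance decomposition~\eqref{eq:total variance formula}, the Brascamp--Lieb bound~\eqref{eq:11201008} (valid uniformly in $\eta$ precisely because the random-field term is linear in $\phi$ and so leaves the Hessian of the Hamiltonian unchanged), and Proposition~\ref{prop3.1009100bis}. Nothing is missing.
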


\section{Translation-covariant gradient Gibbs states in dimensions $d \geq 4$} \label{Section5.1}

This section is devoted to the proof of Theorem~\ref{infinitevolgradgibbs}. The argument is decomposed into two steps. In Section~\ref{section3.1}, we establish a general coupling statement which allows to estimate the Wasserstein distance between the two measures $\mu_{\Lambda_0}^\eta$, $\mu_{\Lambda_1}^\eta$ for two boxes $\Lambda_0, \Lambda_1$ of comparable size and large intersection (Proposition~\ref{p.generalGNMcoupling}). Section~\ref{section3.2} then contains the proof of Theorem~\ref{infinitevolgradgibbs} based on the result established in Proposition~\ref{p.generalGNMcoupling}.

\subsection{A general coupling statement} \label{section3.1}

In this section, we construct a coupling between the two measures $\mu_{\Lambda_0}^\eta$, $\mu_{\Lambda_1}^\eta$ so as to minimize the $L^\infty$-norm of the difference of the gradients of the surfaces in the bulk of the two boxes $\Lambda_0$ and $\Lambda_1$.

\begin{proposition} \label{p.generalGNMcoupling}
Fix a dimension $d \geq 4$, let $\lambda > 0$, and assume that $(\eta(x))_{x \in \Zd}$ are independent with $\E \left[ \eta(x)^2 \right] =1$ and a symmetric distribution. There exists an exponent $\alpha := \alpha (d , c_+ , c_-)  >0$ such that the following holds. Let $L \geq 2$ be an integer and let $\Lambda_0 , \Lambda_1 \subseteq \Zd$ be two boxes such that $\Lambda_L \subseteq \Lambda_0 \cap \Lambda_1$ and $\Lambda_0 \cup \Lambda_1 \subseteq \Lambda_{4L}$. Then, for almost every $\eta$, there exists a coupling $\nu^\eta$ between the measures $\mu_{\Lambda_0}^\eta$ and $\mu_{\Lambda_1}^\eta$ such that
\begin{equation} \label{eq:12270903}
    \E \left[ \int_{\Omega \times \Omega} \sup_{e \in E \left(\Lambda_{L/2} \right)} \left| \nabla \phi_0(e) -  \nabla \phi_1(e)  \right|^2 \nu^\eta (d\phi_0 , d\phi_1) \right] \leq 
    C L^{-\alpha}.
\end{equation}
\end{proposition}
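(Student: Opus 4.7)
The plan is to construct the coupling using the Langevin dynamics~\eqref{eq:09262707bis}. Let $(\phi^{(0)}_t)_{t \geq 0}$ and $(\phi^{(1)}_t)_{t \geq 0}$ be two processes evolving in $\Lambda_0$ and $\Lambda_1$ respectively, driven by a common realization of $\eta$ and by a common family of Brownian motions $(B_t(x))_{x \in \Lambda_0 \cap \Lambda_1}$ on the intersection (with independent Brownian motions on the symmetric difference). To have the marginals equal the Gibbs measures at every time, I would initialize the processes so that $\phi^{(0)}_0 \sim \mu_{\Lambda_0}^\eta$ and $\phi^{(1)}_0 \sim \mu_{\Lambda_1}^\eta$, independently of the Brownian motions (in any joint coupling; only the marginals matter). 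By stationarity of the Langevin dynamics, for every $t \geq 0$ the conditional law of $(\phi^{(0)}_t, \phi^{(1)}_t)$ given $\eta$ is a coupling $\nu^\eta$ of $\mu_{\Lambda_0}^\eta$ and $\mu_{\Lambda_1}^\eta$; I will evaluate at $t = L^2$.

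The key observation is that the difference $w_t := \phi^{(0)}_t - \phi^{(1)}_t$ satisfies, on every vertex $x$ all of whose neighbors lie in $\Lambda_0 \cap \Lambda_1$, the linear parabolic equation $\partial_t w_t(x) = (\nabla \cdot \a_t \nabla w_t)(x)$, where
\begin{equation*}
    \a_t(\{x,y\}) := \int_0^1 V''\bigl(\theta\, \nabla \phi^{(0)}_t(x,y) + (1-\theta)\, \nabla \phi^{(1)}_t(x,y) \bigr) \, d\theta \in [c_-, c_+]
\end{equation*}
is uniformly elliptic by~\eqref{eq:V ellipticity}. The $\lambda \eta$-drifts and common Brownian increments cancel in the difference, and the identity $V'(a) - V'(b) = (a-b) \int_0^1 V''(b + \theta(a-b))\, d\theta$ produces the environment $\a_t$. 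Since $\Lambda_L \subseteq \Lambda_0 \cap \Lambda_1$, this equation holds at least on $\Lambda_{L-1}$.

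I would then apply the De Giorgi-Nash-Moser parabolic regularity estimate (Proposition~\ref{paraNash}) to $w$ on a cylinder $(0, L^2) \times \Lambda_{L'}$, with $L'$ chosen slightly less than $L$ so that the parabolic equation holds cleanly on $\Lambda_{L'}$ and $\Lambda_{L'/2} \supseteq \Lambda_{L/2}$; this can be arranged by a preliminary rescaling of the side length, permitted by the hypothesis $\Lambda_0 \cup \Lambda_1 \subseteq \Lambda_{4L}$. Proposition~\ref{paraNash} at time $t = L^2$ yields
\begin{equation*}
    \sup_{e \in E(\Lambda_{L/2})} \lvert \nabla w_{L^2}(e) \rvert^2 \leq \frac{C}{L^{2\alpha}} \cdot \frac{1}{L^{d+2}} \int_0^{L^2} \sum_{x \in \Lambda_{L'}} \lvert w_s(x) \rvert^2 \, ds.
\end{equation*}
Taking expectation, invoking stationarity ($\phi^{(i)}_s$ has law $\mu_{\Lambda_i}^\eta$), the inequality $\lvert w_s(x) \rvert^2 \leq 2\lvert \phi^{(0)}_s(x) \rvert^2 + 2\lvert \phi^{(1)}_s(x) \rvert^2$, and Lemma~\ref{2.111825} applied to both $\Lambda_0$ and $\Lambda_1$ (each of side length $O(L)$), I obtain $\E[\lvert w_s(x) \rvert^2] \leq C\ln L$ in dimension $d=4$ and $\leq C$ in $d \geq 5$. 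Substituting back gives
\begin{equation*}
    \E\Bigl[ \sup_{e \in E(\Lambda_{L/2})} \lvert \nabla w_{L^2}(e) \rvert^2 \Bigr] \leq \frac{C \ln L}{L^{2\alpha}} \leq C L^{-\alpha'}
\end{equation*}
for any $\alpha' < 2\alpha$, establishing~\eqref{eq:12270903}.

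I expect the main technical obstacle to be the first step—producing a genuine linear parabolic equation for $w$ on a region large enough to carry the DGNM estimate. The hypothesis $\Lambda_L \subseteq \Lambda_0 \cap \Lambda_1$ is tight: vertices of $\Lambda_L$ near $\partial \Lambda_L$ may have neighbors in $\Lambda_0 \setminus \Lambda_1$ or vice versa, where the Dirichlet value $0$ enters asymmetrically into the two dynamics and creates a forcing term. This is handled by working on a shrunken box and using the room afforded by $\Lambda_0 \cup \Lambda_1 \subseteq \Lambda_{4L}$ to rescale. A secondary and milder obstacle is the logarithmic divergence of the fluctuations in dimension $4$, which is absorbed into the exponent.
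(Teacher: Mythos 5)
Your argument is essentially the paper's proof: couple via the Langevin dynamics with shared noise, linearize the difference into a uniformly elliptic parabolic equation, apply Proposition~\ref{paraNash}, and close with Lemma~\ref{2.111825} after averaging over the disorder and absorbing the logarithm in $d=4$ into the exponent. The one point worth flagging is that your boundary-mismatch worry near $\partial\Lambda_L$ is unfounded: since $\Lambda_L\subseteq\Lambda_0\cap\Lambda_1$, the linearized equation $\partial_t w - \nabla\cdot\a\nabla w = 0$ for $w=\phi_{0,t}-\phi_{1,t}$ (with both surfaces extended by $0$ outside their respective domains, so that $w$ is globally defined) holds at \emph{every} $x\in\Lambda_L$ --- the discrete operator~\eqref{eq:11330408} at $x$ only queries $w$ at neighbors of $x$, which are always defined, and the linearization $V'(a)-V'(b)=\a(a-b)$ is indifferent to whether a neighbor's value is a dynamics value or a Dirichlet zero --- so Proposition~\ref{paraNash} applies directly on $(0,L^2)\times\Lambda_L$ without shrinking; your proposed workaround, taking $L'<L$ with $\Lambda_{L'/2}\supseteq\Lambda_{L/2}$, is moreover self-contradictory, since $L'<L$ forces $\Lambda_{L'/2}\subseteq\Lambda_{L/2}$.
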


\begin{proof}
Fix an integer $L \geq 2$ and two boxes $\Lambda_0, \Lambda_1$ satisfying the assumptions of the proposition. We denote by $L_0$ and $L_1$ their respective side lengths and note that $L_0 , L_1 \leq 4 L$. We then fix a realization of the random field $\eta : \Zd \to \R$, and let $\phi_0$ and $\phi_1$ be two independent random surfaces distributed according to the measures $\mu^{\eta}_{\Lambda_0}$ and $\mu^{\eta}_{\Lambda_{1}}$ respectively. We consider the two Langevin dynamics (driven by the same Brownian motions)
\begin{equation} \label{eq:1106040822}
    \left\{ \begin{aligned}
    \di \phi_{0,t}(x) &= \sum_{e \ni x} V' \left( \nabla \phi_{0,t} ( e) \right) \di t +  \lambda \eta(x) \di t + \sqrt{2} \di B_t(x) &&(t , x) \in  (0 , \infty) \times \Lambda_0, \\
    \phi_{0,0}(x)  &= \phi_0(x) &&x \in \Lambda_0,\\
    \phi_{0,t}( x) &= 0 &&(t , x) \in (0 , \infty) \times \partial \Lambda_0,
    \end{aligned} \right.
\end{equation}
and
\begin{equation} \label{eq:1107040822}
    \left\{ \begin{aligned}
    \di \phi_{1,t}(x) &= \sum_{e \ni x} V' \left( \nabla \phi_{1,t} (e) \right) \di t + \lambda \eta(x) \di t + \sqrt{2} \di B_t(x) &&(t , x) \in  (0 , \infty) \times \Lambda_1, \\
    \phi_{1, 0}(x)  &= \phi_{1}(x) &&x \in \Lambda_{1},\\
    \phi_{1,t} (x) &= 0 &&(t , x) \in (0 , \infty) \times \partial \Lambda_1.
    \end{aligned} \right.
\end{equation}
Using that the Langevin dynamics~\eqref{eq:1106040822} (resp.~\eqref{eq:1107040822}) is invariant under Gibbs measure $\mu^{\eta}_{\Lambda_0}$ (resp. $\mu^{\eta}_{\Lambda_1}$), we see that the interface $\phi_{0,t}$ (resp. $\phi_{1,t}$) is distributed according to the measure $\mu^{\eta}_{\Lambda_0}$ (resp. $\mu^{\eta}_{\Lambda_1}$) for any time $t \geq 0$. We denote by $\mu$ the law of the joint process $\left( \phi_{0 , t}, \phi_{1,t} \right)_{t \geq 0}$ and by $\left\langle \cdot \right\rangle_\mu$ the expectation with respect to $\mu$.

Considering the difference between the equations~\eqref{eq:1106040822} and~\eqref{eq:1107040822} and using that the driving Brownian motions are the same, we obtain that the map $w(t, x) := \phi_{0,t}(x) - \phi_{1,t}(x)$ solves the equation
\begin{equation} \label{eq:08421404}
    \di w(t , x) = \left( \sum_{e \ni x} V' \left( \nabla \phi_{0,t} ( e) \right) - \sum_{e \ni x} V' \left( \nabla \phi_{1,t} ( e) \right) \right) \di t \hspace{5mm}\mbox{in} \hspace{5mm} [0 , \infty) \times \left( \Lambda_0 \cap \Lambda_1 \right).
\end{equation}
Using the definition~\eqref{eq:11330408} for the elliptic operator $- \nabla \cdot \a \nabla$ and the assumption $\Lambda_L \subseteq \Lambda_0 \cap \Lambda_1$, we see that the identity~\eqref{eq:08421404} implies
that the function $w(t, x) := \phi_{0,t}(x) - \phi_{1,t}(x)$ solves the parabolic equation
\begin{equation*}
    \partial_t w - \nabla \cdot \a \nabla w = 0\hspace{5mm} \mbox{in} \hspace{5mm} [0 , \infty) \times \Lambda_0 \cap \Lambda_1,
\end{equation*}
where the coefficient $\a$ is given by the formula
\begin{equation} \label{def.environmenta}
     \a (t  , e ) := \int_0^1 V'' \left( s \nabla \phi_{0,t}(e) + (1 - s) \nabla {\phi_{1,t}}(e) \right) \, d s.
\end{equation}
Using the uniform convexity of the potential $V$ stated in~\eqref{eq:V ellipticity} and applying the De Giorgi-Nash-Moser regularity estimate stated in Proposition~\ref{paraNash}, we obtain that there exists an exponent $\alpha:= \alpha(d , c_+ , c_-) > 0$ such that
\begin{align} \label{13462302}
    \sup_{e \in E \left( \Lambda_{L/2} \right)} \left| \nabla \phi_{0, L^2}(e) - \nabla \phi_{1, L^2}(e) \right|^2 & \leq \frac{C}{L^{\alpha}} \frac{1}{L^{d +2}} \int_0^{L^2} \sum_{x \in \Lambda_L}   \left( \phi_{0, t}(x) - \phi_{1, t}(x) \right)^2 \, dt \\
    & \leq \frac{C}{L^{\alpha}} \frac{1}{L^{d +2}} \int_0^{L^2} \sum_{x \in \Lambda_L}  \phi_{0, t}(x)^2 +  \phi_{1, t}(x)^2 \, dt. \notag
\end{align}
We then integrate over the measure $\mu$ and use the stationarity of the Langevin dynamics~\eqref{eq:1106040822} and~\eqref{eq:1107040822}. We obtain
\begin{align}
     \left\langle \sup_{e \in E \left( \Lambda_{L/2} \right)} \left| \nabla \phi_{0, L^2}(e) - \nabla \phi_{1, L^2}(e) \right|^2 \right\rangle_\mu & \leq \frac{C}{L^{\alpha}} \frac{1}{L^{d +2}} \int_0^{L^2} \sum_{x \in \Lambda_L}  \left\langle \phi_{0, t}(x)^2 \right\rangle_\mu +  \left\langle  \phi_{1, t}(x)^2 \right\rangle_\mu \, dt \\
    & = \frac{C}{L^{\alpha}} \frac{1}{L^{d +2}} \int_0^{L^2} \sum_{x \in \Lambda_L}  \left\langle \phi(x)^2 \right\rangle_{\mu^\eta_{\Lambda_{0}}} +  \left\langle \phi(x)^2 \right\rangle_{\mu^\eta_{\Lambda_{1}}} \, dt \notag \\
    & = \frac{C}{L^{\alpha}} \frac{1}{L^{d}} \sum_{x \in \Lambda_L}  \left\langle  \phi(x)^2 \right\rangle_{\mu^\eta_{\Lambda_{0}}} +  \left\langle \phi(x)^2 \right\rangle_{\mu^\eta_{\Lambda_{1}}}. \notag
\end{align}
Averaging over the disorder and using Lemma~\ref{2.111825}, we obtain
\begin{equation*}
    \E \left[  \left\langle \sup_{e \in E \left( \Lambda_{L/2} \right)} \left| \nabla \phi_{0, L^2}(e) - \nabla \phi_{1, L^2}(e) \right|^2 \right\rangle_\mu \right] \leq
\left\{ \begin{aligned}
    C \left( \ln L_0 + \ln  L_1 \right)L^{-\alpha} ~\mbox{if}~ d =4, \\
    CL^{-\alpha} ~\mbox{if}~ d \geq 5.
    \end{aligned} \right.
\end{equation*}
Using the assumption $L_0 , L_1 \leq 4L$ and reducing the value of the exponent $\alpha$ to absorb the logarithm, we deduce that, in any dimension $d \geq 4$,
\begin{equation} \label{eq:14490704}
    \E \left[ \left\langle \sup_{e \in E \left( \Lambda_{L/2} \right)} \left| \nabla \phi_{0, L^2}(e) - \nabla \phi_{1, L^2}(e) \right|^2 \right\rangle_{\mu} \right] \leq C L^{-\alpha}.
\end{equation}
We then define $\nu^\eta$ to be the law of the pair $\left(  \phi_{0, L^2},  \phi_{1, L^2}\right)$ to complete the proof of Proposition~\ref{p.generalGNMcoupling}.

\end{proof}

\subsection{Proof of Theorem~\ref{infinitevolgradgibbs}} \label{section3.2}

This section is devoted to the proof of Theorem~\ref{infinitevolgradgibbs} building upon the result of Proposition~\ref{p.generalGNMcoupling}. We recall the definitions of the Wasserstein distance~\eqref{eq:10081104} stated in Section~\ref{section2}.

\begin{proof}[Proof of Theorem~\ref{infinitevolgradgibbs}]
We first observe that Lemma~\ref{2.111825} and Proposition~\ref{p.generalGNMcoupling} imply the following inequality: for each fixed integer $L \geq 1$ and each pair of boxes $\Lambda_0 , \Lambda_1 \subseteq \Zd$ satisfying $\Lambda_L \subseteq \Lambda_0 \cap \Lambda_1 $ and $\Lambda_0 \cup \Lambda_1 \subseteq \Lambda_{4L}$
\begin{equation} \label{eq:1227090333}
    \E \left[ W_{\X} \left( \mu_{\Lambda_0}^\eta ,  \mu_{\Lambda_1}^\eta  \right)^2 \right] \leq 
    C L^{-\alpha}.
\end{equation}
Let us then fix $n \in \N$. Applying~\eqref{eq:1227090333} with the integer $L = 2^n$, the boxes $\Lambda_0 = \Lambda_{2^n}$ and $\Lambda_1 = \Lambda_{2^{n+1}}$, we obtain
\begin{equation} \label{eq:144909033}
   \E \left[ W_{\X} \left( \mu_{\Lambda_{2^n}}^\eta ,  \mu_{\Lambda_{2^{n+1}}}^\eta  \right)^2 \right] 
   \leq C 2^{-\alpha n}.
\end{equation}
The inequality~\eqref{eq:144909033} and the comparison estimate~\eqref{quantWassercoupling} imply that the sequence of measures $( \mu_{\Lambda_{2^n}}^\eta )_{n \in \N}$ is almost surely Cauchy in the metric space $(\mathcal{P}_2\left(\X_1\right) , W_{\X})$. Since this metric space is complete, it implies that the sequence converges almost surely and we denote the limiting measure by $\mu^\eta_\nabla \in \mathcal{P}_2\left(\X_1\right)$. It remains to verify that the measure $\mu^\eta_\nabla$ satisfies the properties listed in the statement of Theorem~\ref{infinitevolgradgibbs}.

\smallskip

We first verify the quantitative estimate~\eqref{quantWassercoupling}. By the inequality~\eqref{eq:14490704} and the gluing lemma (following the terminology of~\cite[Chapter 1]{Vil}), there exists a coupling $\pi^\eta_\nabla$ between the measures $\left(\mu_{\Lambda_{2^kL}}^\eta \right)_{k \geq 0}$ such that, for any $k \geq 0$,
\begin{equation} \label{eq:19071902}
    \E \left[ \left\langle \sup_{e \in \Lambda_{2^{k-1}L}} \left| \chi_{2^k L}(e) - \chi_{2^{k+1} L}(e)\right|^2 \right\rangle_{\pi^\eta_\nabla} \right] \leq C (2^{k}L)^{-\alpha},
\end{equation}
where $\left\langle \cdot \right\rangle_{\pi^\eta_\nabla}$ denotes the expectation with respect to $\pi^\eta_\nabla$ and $\chi_{2^k L}(e)$ is the gradient at the edge $e$ for the measure $\mu_{\Lambda_{2^kL}}^\eta$. The inequality~\eqref{eq:19071902} implies that, for almost every $\eta$, the gradients $\chi_{2^k L}$ converge pointwise almost surely with respect to the measure $\pi^\eta_\nabla$ to a random variable $\chi_\infty$ distributed according to $\mu^\eta_\nabla$. The triangle inequality additionally shows
\begin{equation} \label{eq:20091702}
    \E \left[ \left\langle \sup_{e \in \Lambda_{L/2}} \left| \chi_{L}(e) - \chi_{\infty}(e)\right|^2 \right\rangle_{\pi^\eta_\nabla} \right]^{\frac 12}  \leq \sum_{k=0}^\infty \E \left[ \left\langle \sup_{e \in \Lambda_{2^{k-1}L}} \left| \chi_{2^k L}(e) - \chi_{2^{k+1} L}(e)\right|^2 \right\rangle_{\pi^\eta_\nabla} \right]^{\frac 12} \leq C L^{-\alpha},
\end{equation}
which implies~\eqref{quantWassercoupling}.

We next verify the translation covariance~\eqref{translation covaraince grad Gibbs} of the measure $\mu^\eta_\nabla$. To this end, let us note that, by the definitions of the measure $\mu_{\Lambda}^{\eta}$ and of the shift operator $\tau_y$, one has the identity: for any point $y \in \Zd$, any realization of the random field $\eta$ and any side length $L \in \N$,
\begin{equation} \label{eq:16510704}
    \left(\tau_y \right)_* \mu_{\Lambda_{L}}^{\eta} =  \mu_{y + \Lambda_{L}}^{\tau_y \eta}.
\end{equation}
We then select an integer $n$ large enough (depending on $y$) so that $\Lambda_{2^{n-1}} \subseteq y + \Lambda_{2^n}$. Applying Proposition~\ref{p.generalGNMcoupling} with the integer $L := 2^{n-1}$ and the boxes $\Lambda_0 := \Lambda_{2^n}$ and $\Lambda_1 := y + \Lambda_{2^n}$, we obtain
\begin{equation} \label{eq:16580903}
    \E \left[ W_{\X} \left( \mu_{ \Lambda_{2^n}}^{\eta} , \mu_{y + \Lambda_{2^n}}^{\eta} \right)^2 \right] \leq C2^{-\alpha n}.
\end{equation}
Using the inequality~\eqref{eq:16580903} and the translation invariance of the law of the disorder $\eta$, we deduce that
\begin{align} \label{eq:16490704}
    \E \left[  W_{\X}\left( \mu^{\tau_y \eta}_\nabla , \mu_{y + \Lambda_{2^n}}^{\tau_y \eta} \right)^2 \right] & =  \E \left[  W_{\X}\left( \mu^{\eta}_\nabla , \mu_{y + \Lambda_{2^n}}^{\eta} \right)^2 \right] \\
    & \leq 2 \E \left[  W_{\X}\left( \mu^{\eta}_\nabla , \mu_{\Lambda_{2^n}}^{\eta} \right)^2 \right] + 2 \E \left[  W_{\X}\left( \mu_{\Lambda_{2^n}}^{\eta} , \mu_{y + \Lambda_{2^n}}^{ \eta} \right)^2 \right] \notag \\
    & \leq C 2^{-\alpha n} .\notag
\end{align}
Additionally, using the translation property~\eqref{eq:16481704} of the Wasserstein metric, we may write
\begin{equation} \label{eq:16500704}
    \E \left[  W_{\X}\left( \left(\tau_y\right)_* \mu^{\eta}_\nabla , \left(\tau_y\right)_* \mu_{\Lambda_{2^n}}^{\eta} \right)^2 \right] \leq C e^{C |y|} \E \left[  W_{\X}\left( \mu^{\eta}_\nabla , \mu_{\Lambda_{2^n}}^{\eta} \right)^2 \right] \leq C e^{C|y|}2^{-\alpha n}.
\end{equation}
Combining the identity~\eqref{eq:16510704} with the inequalities~\eqref{eq:16490704},~\eqref{eq:16500704}, we obtain, for any integer $n \in \N$ (chosen large enough depending on $y$),
\begin{align*} \label{eq:18010903}
    \E \left[ W_{\X}\left( \left(\tau_y\right)_* \mu^{\eta}_\nabla , \mu^{\tau_ y \eta}_\nabla \right)^2 \right] & \leq \E \left[ W_{\X}\left( \left(\tau_y \right)_* \mu^{\eta}_\nabla , \left(\tau_y\right)_* \mu_{\Lambda_{2^n}}^{\eta} \right)^2 \right] + \E \left[W_{\X}\left( \mu_{y + \Lambda_{2^n}}^{\tau_y \eta}, \mu^{\tau_y \eta}_\nabla \right)^2 \right] \\
    & \leq C (1 + e^{C|y|}) 2^{- \alpha n}.
\end{align*}
Take the limit $n \to \infty$ yields
\begin{equation} \label{eq:18380903}
    \E \left[ W_{\X}\left( \left(\tau_y\right)_* \mu^{\eta}_\nabla , \mu^{\tau_ y \eta}_\nabla \right)^2 \right] = 0,
\end{equation}
which implies the translation-covariance property~\eqref{translation covaraince grad Gibbs}.

There remains to prove the Dobrushin–Lanford–Ruelle (DLR) equation~\eqref{eq:115211044}. We first note that, by the definition of the finite-volume measure $\mu_{\Lambda_{2^n}}^\eta$, one has, for any function $f \in C_b \left( \X \right)$ and any subset $\Lambda \subseteq \Lambda_{2^n}$,
\begin{equation} \label{12011104}
    \int_{\Omega} \mu^{\eta}_{\Lambda_{2^n}}(d \phi_1) \int_\Omega f(\nabla \phi) \mu_{\Lambda}^{\eta, \phi_1 }(d \phi)= \int_\Omega f( \nabla \phi) \mu^\eta_{\Lambda_{2^n}} (d \phi).
\end{equation}
One then observes that the mapping $\phi_1 \mapsto \int_\Omega f(\nabla \phi) \mu_{\Lambda}^{\eta, \phi_1 }(d \phi)$ depends only on finitely many values of the (discrete) gradient of the map $\phi_1$ (the values on which $f$ depends and the values of the gradient on the edges which are in a finite neighborhood of the boundary $\partial \Lambda$). It is additionally continuous and bounded. Consequently, it belongs to the space $C_b(\X)$. Taking the limit $n \to \infty$ in both sides of the identity~\eqref{12011104} and using the almost-sure weak convergence of the sequence $\mu_{\Lambda_{2^n}}^{\eta}$ in the space $\mathcal{P} \left( \X \right)$, we obtain
\begin{equation*}
    \int_{\Omega} \mu^{\eta}_\nabla(d \chi) \int_\Omega f(\nabla \phi) \mu_{\Lambda}^{\eta, \psi(\chi) }(d \phi)= \int_\Omega f( \chi) \mu^\eta_\nabla (d \chi),
\end{equation*}
where $\psi(\chi)$ is any function satisfying $\nabla \psi(\chi) = \chi$. The proof of~\eqref{eq:115211044}, and thus of Theorem~\ref{infinitevolgradgibbs}, is complete.
\end{proof}

\section{Translation-covariant Gibbs states in dimensions $d \geq 5$} \label{Section4}

This section is devoted to the proof of Theorem~\ref{transcovgibbsstates}. The argument is decomposed in three steps, which correspond to Section~\ref{section4.1}, Section~\ref{Section4.2} and Section~\ref{section4.3}. In Section~\ref{section4.1}, we prove a decorrelation estimate for the spatial average of the height of the random surface in dimensions~$d \geq 5$. In Section~\ref{Section4.2}, we combine the decorrelation inequality established in Section~\ref{section4.1} with the coupling estimate for the gradient of the surface proved in Proposition~\ref{p.generalGNMcoupling}, and establish an upper bound for the Wasserstein distance between the two measures $\mu_{\Lambda_0}^\eta$ and $\mu_{\Lambda_1}^\eta$ where $\Lambda_0$ and $\Lambda_1$ are two boxes of similar size and large intersection. Section~\ref{section4.3} contains the proof of Theorem~\ref{transcovgibbsstates}, building upon the results of Section~\ref{Section4.2}.

\subsection{A decorrelation estimate for the random surface} \label{section4.1}
We first establish an upper bound on the $L^2$-norm of the spatially averaged height of the random surface in a box $\Lambda_0$ under the annealed measure $\E \mu_{\Lambda}^\eta$. We note that the arguments used in the proof of Lemma~\ref{lemma15061003} are similar to the ones of~\cite[Theorem 3.1]{CK15}.

\begin{lemma} \label{lemma15061003}
    Fix a dimension $d \geq 5$, let $\lambda > 0$, and assume that $(\eta(x))_{x \in \Zd}$ are independent with $\E \left[ \eta(x)^2 \right] =1$ and a symmetric distribution. There exists a constant $C <\infty$ such that for any box $\Lambda_0$ of side length $\ell \geq 1$ and any box $\Lambda$ satisfying $\Lambda_0 \subseteq \Lambda$, 
    \begin{equation} \label{eq:08381204}
    \E \left[  \left\langle \left| \frac{1}{\left| \Lambda_0\right|} \sum_{x \in \Lambda_{0}} \phi(x) \right|^2 \right\rangle_{\mu^\eta_\Lambda} \right] \leq \frac{C}{\ell^{d-4}}.
\end{equation}
\end{lemma}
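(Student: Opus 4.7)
The plan is to use the variance decomposition
\begin{equation*}
    \left\langle \bar{\phi}_{\Lambda_0}^2 \right\rangle_{\mu^\eta_\Lambda} = \left\langle (\bar{\phi}_{\Lambda_0} - \langle \bar{\phi}_{\Lambda_0}\rangle_{\mu^\eta_\Lambda})^2 \right\rangle_{\mu^\eta_\Lambda} + \langle \bar{\phi}_{\Lambda_0}\rangle_{\mu^\eta_\Lambda}^2, \qquad \bar{\phi}_{\Lambda_0} := |\Lambda_0|^{-1} \sum_{x\in\Lambda_0} \phi(x),
\end{equation*}
and control the thermal and quenched contributions separately. For the thermal term, the Brascamp--Lieb covariance bound gives $\cov_{\mu^\eta_\Lambda}(\phi(x),\phi(y)) \leq C\, G_\Lambda(x,y) \leq C|x-y|_+^{-(d-2)}$ uniformly in $\eta$, so that
\begin{equation*}
    \E\left[ \left\langle (\bar{\phi}_{\Lambda_0} - \langle \bar{\phi}_{\Lambda_0}\rangle)^2\right\rangle_{\mu^\eta_\Lambda}\right] \leq \frac{C}{|\Lambda_0|^2} \sum_{x,y\in \Lambda_0} |x-y|_+^{-(d-2)} \leq C\ell^{-(d-2)},
\end{equation*}
which is more than enough in $d\geq 5$. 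This is the easier piece.

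For the quenched piece, the symmetry of $\eta$ implies $\E[\langle \phi(x)\rangle_{\mu^\eta_\Lambda}] = 0$, so what is needed is a decay estimate on the disorder-covariance $\E[\langle\phi(x)\rangle_{\mu^\eta_\Lambda}\langle \phi(y)\rangle_{\mu^\eta_\Lambda}]$. Differentiating the Gibbs average in a field variable gives the identity
\begin{equation*}
    \frac{\partial \langle \phi(x)\rangle_{\mu^\eta_\Lambda}}{\partial \eta(z)} = \lambda\, \cov_{\mu^\eta_\Lambda}(\phi(x),\phi(z)),
\end{equation*}
and the Brascamp--Lieb inequality again bounds the right-hand side by $C G_\Lambda(x,z) \leq C|x-z|_+^{-(d-2)}$ uniformly in $\eta$. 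I then perform a martingale-difference decomposition with respect to an arbitrary enumeration of $\{\eta(z)\}_{z\in\Zd}$: writing $\langle\phi(x)\rangle = \sum_z D_z \langle\phi(x)\rangle$, orthogonality gives
\begin{equation*}
    \E[\langle\phi(x)\rangle\langle\phi(y)\rangle] = \sum_z \E\!\left[ D_z\langle\phi(x)\rangle\, D_z\langle\phi(y)\rangle\right],
\end{equation*}
and a one-variable calculation using $\E[\eta(z)^2]=1$ shows that the uniform Lipschitz bound above yields $\|D_z\langle\phi(x)\rangle\|_2 \leq C\, G_\Lambda(x,z)$ (and similarly for $y$), even though $\eta$ need not satisfy any Poincar\'e inequality.

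Applying Cauchy--Schwarz term by term then gives the key decorrelation estimate
\begin{equation*}
    |\E[\langle\phi(x)\rangle_{\mu^\eta_\Lambda}\langle\phi(y)\rangle_{\mu^\eta_\Lambda}]| \leq C\sum_{z\in\Zd} G_\Lambda(x,z)\, G_\Lambda(y,z) \leq C|x-y|_+^{-(d-4)},
\end{equation*}
where the last inequality is the standard polyharmonic/convolution estimate $G*G \leq C|\cdot|_+^{-(d-4)}$, valid precisely for $d\geq 5$. Summing over $x,y\in\Lambda_0$ yields
\begin{equation*}
    \E[\langle \bar\phi_{\Lambda_0}\rangle_{\mu^\eta_\Lambda}^2] \leq \frac{C}{|\Lambda_0|^2}\sum_{x,y\in\Lambda_0} |x-y|_+^{-(d-4)} \leq C\ell^{-(d-4)},
\end{equation*}
which combined with the thermal bound gives~\eqref{eq:08381204}. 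The main obstacle is the third step: extracting from the uniform Green's-function bound on the $\eta$-derivative a clean $L^2$ bound on the martingale differences without Gaussianity or a log-Sobolev/Poincar\'e hypothesis on $\eta$; once this is obtained, the rest is a direct convolution computation. This is also, I expect, where the results stated as Lemma~\ref{prop.thermalcorrelation} and Lemma~\ref{prop5.5} in the paper (and in the analogous arguments of~\cite[Theorem~3.1]{CK15}) enter.
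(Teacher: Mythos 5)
Your proof is correct and takes essentially the same route as the paper: the thermal/quenched variance decomposition, the uniform-in-$\eta$ Brascamp--Lieb covariance bound, the derivative identity $\partial_{\eta(z)}\langle\phi(x)\rangle_{\mu^\eta_\Lambda}=\lambda\,\cov_{\mu^\eta_\Lambda}(\phi(x),\phi(z))$, the resulting $\sup$-Lipschitz estimate, and the convolution bound $\sum_z|x-z|_+^{-(d-2)}|y-z|_+^{-(d-2)}\leq C|x-y|_+^{-(d-4)}$ are precisely the paper's ingredients (its Lemma~\ref{prop.thermalcorrelation} and Lemma~\ref{prop5.5}). The only deviation is cosmetic: you reprove the covariance inequality of Lemma~\ref{prop5.5} from scratch via martingale differences (which is indeed how it is proved, needing only $\E[\eta(z)^2]=1$ and no Poincar\'e hypothesis, so the gap you flagged is not a gap), whereas the paper cites it.
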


The proof of Lemma~\ref{lemma15061003} requires to use the following two results for which we refer to the articles~\cite{GO2, CK15}. The first one provides an estimate on the covariance of the variables $\phi(x)$ and $\phi(y)$ when the surface $\phi$  is distributed according to the measure $\mu_\Lambda^\eta$ and applies uniformly over the realizations of the disorder $\eta$. Before stating the result, we recall the notation $|\cdot|_+ := \max \left( |\cdot| , 1 \right). $

\begin{lemma}[Proposition 2.3 (ii) of~\cite{CK15}] \label{prop.thermalcorrelation}
Fix a dimension $d \geq 3$. There exists a constant $C:= C(d , c_+ , c_-) < \infty$ such that for any $\lambda > 0$, any realization of the disorder $\eta : \Lambda \to \R$, and any pair of points $x , y \in \Lambda$,
\begin{equation} \label{eq:15231003}
    \left| \left\langle \phi(x) \phi(y) \right\rangle_{\mu^\eta_{\Lambda}} - \left\langle \phi(x) \right\rangle_{\mu^\eta_{\Lambda}} \left\langle \phi(y) \right\rangle_{\mu^\eta_{\Lambda}} \right| \leq \frac{C}{|x - y|_+^{d-2}}.
\end{equation}
\end{lemma}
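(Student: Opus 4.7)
The plan is to prove the bound via the Helffer--Sj\"{o}strand (random walk) representation of the covariance, combined with the uniform Nash--Aronson heat kernel bounds for uniformly elliptic operators on $\Z^d$. This approach has the advantage of producing a quenched estimate (i.e.\ valid for each realization of $\eta$), which is what the lemma requires, and it handles the external field $\lambda \eta(x)\phi(x)$ essentially for free because this term is linear in $\phi$ and therefore does not alter the Hessian of the Hamiltonian.

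The first step is to write down the Helffer--Sj\"{o}strand formula for the Gibbs measure $\mu_\Lambda^\eta$. Concretely, for any two points $x,y\in\Lambda$ one has
\begin{equation*}
    \left\langle \phi(x)\phi(y) \right\rangle_{\mu_\Lambda^\eta} - \left\langle \phi(x) \right\rangle_{\mu_\Lambda^\eta} \left\langle \phi(y) \right\rangle_{\mu_\Lambda^\eta}
    = \left\langle G^{\phi}_\Lambda(x,y) \right\rangle_{\mu_\Lambda^\eta},
\end{equation*}
where $G^\phi_\Lambda$ is the Green's function (with zero Dirichlet boundary conditions on $\partial\Lambda$) of the $\phi$-dependent elliptic operator $\mathcal{L}^\phi := -\nabla\cdot \a^\phi \nabla$ with coefficients
\begin{equation*}
    \a^\phi(e) := V''(\nabla \phi(e)), \qquad e \in E(\Lambda).
\end{equation*}
The derivation is standard: differentiating the exponential identity $\langle\phi(x)\rangle_{\mu_\Lambda^{\eta+s\delta_y}}\big|_{s=0}$ with respect to $s$ gives the covariance on the left-hand side, while integrating by parts in $\phi$ (using that the only nonlinearity comes from $\sum V(\nabla\phi)$) identifies the result with $G^\phi_\Lambda(x,y)$. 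Crucially, the disorder term $-\lambda\sum \eta(x)\phi(x)$ contributes only a linear shift and so leaves the random environment $\a^\phi$ unchanged.

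The second step is to invoke the uniform heat-kernel bound. Since $V$ satisfies the uniform ellipticity condition~\eqref{eq:V ellipticity}, the environment $\a^\phi$ obeys $c_- \le \a^\phi(e) \le c_+$ pointwise in $\phi$. By the Nash--Aronson estimate for uniformly elliptic operators on $\Z^d$ with zero Dirichlet boundary data (see e.g.\ the bounds in~\cite{delmotte1997inegalite} used already in Proposition~\ref{paraNash}), the kernel of $e^{-t\mathcal{L}^\phi}$ satisfies a Gaussian upper bound with constants depending only on $d, c_-, c_+$. Integrating in $t$ produces, in dimensions $d\geq 3$, the bound
\begin{equation*}
    0 \le G^\phi_\Lambda(x,y) \le \frac{C(d,c_-,c_+)}{|x-y|_+^{d-2}},
\end{equation*}
uniformly over the realization of $\phi$, the disorder $\eta$, and the box $\Lambda \supseteq \{x,y\}$ (the Dirichlet boundary conditions only decrease the Green's function compared to the whole-space one).

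The final step is to insert this deterministic pointwise bound into the Helffer--Sj\"{o}strand identity and integrate against $\mu_\Lambda^\eta$:
\begin{equation*}
    \left| \left\langle \phi(x)\phi(y) \right\rangle_{\mu_\Lambda^\eta} - \left\langle \phi(x) \right\rangle_{\mu_\Lambda^\eta} \left\langle \phi(y) \right\rangle_{\mu_\Lambda^\eta} \right|
    = \left\langle G^{\phi}_\Lambda(x,y) \right\rangle_{\mu_\Lambda^\eta}
    \le \frac{C}{|x-y|_+^{d-2}}.
\end{equation*}
The main technical point to be careful about is justifying the Helffer--Sj\"{o}strand identity in the presence of the linear external field and verifying that the resulting Green's function bound applies uniformly in the disorder; both are straightforward in this setup because the random-field term does not interact with the second variation of the Hamiltonian. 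The Nash--Aronson input is a black box whose hypotheses (uniform ellipticity) are exactly the assumption~\eqref{eq:V ellipticity}, so no probabilistic information about $\phi$ or $\eta$ is needed beyond the existence of $\mu_\Lambda^\eta$ itself.
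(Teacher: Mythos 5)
Your overall strategy --- Helffer--Sj\"{o}strand representation plus Nash--Aronson heat-kernel bounds --- is the right and standard route; it is essentially what \cite{CK15} does (this paper does not reprove the lemma, it cites Proposition~2.3(ii) of \cite{CK15}). You also correctly pinpoint the key structural fact: the random-field term is linear in $\phi$, so it leaves the Hessian of the Hamiltonian untouched and the Helffer--Sj\"{o}strand coefficients remain in $[c_-,c_+]$ uniformly in $\eta$.

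However, there is a genuine gap in your middle step. The identity
\begin{equation*}
  \left\langle \phi(x)\phi(y)\right\rangle_{\mu_\Lambda^\eta}-\left\langle \phi(x)\right\rangle_{\mu_\Lambda^\eta}\left\langle \phi(y)\right\rangle_{\mu_\Lambda^\eta} = \left\langle G^\phi_\Lambda(x,y)\right\rangle_{\mu_\Lambda^\eta}
\end{equation*}
with $G^\phi_\Lambda$ the Green's function of the \emph{static} operator $-\nabla\cdot V''(\nabla\phi)\nabla$ is false outside the Gaussian case. If you set $u(x):=\operatorname{Cov}_{\mu_\Lambda^\eta}(\phi(x),\phi(y))$ and differentiate the stationarity relation $\big\langle \sum_{e\ni x}V'(\nabla\phi(e))+\lambda\eta(x)+s\lambda\delta_y(x)\big\rangle_{\mu^{\eta+s\delta_y}}=0$ at $s=0$, you get $\operatorname{Cov}\big(\sum_{e\ni x}V'(\nabla\phi(e)),\,\phi(y)\big)=-\delta_y(x)$; because $V'(\nabla\phi(e))$ is a nonlinear function of $\phi$, this does \emph{not} close to $-\nabla\cdot \mathbf{a}^\phi\nabla u=\delta_y$, and the ``integration by parts'' step you sketch does not produce $G^\phi_\Lambda$. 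The actual Helffer--Sj\"{o}strand formula inverts the full operator $-\nabla\cdot V''(\nabla\phi)\nabla + \mathcal{L}_{\mathrm{Langevin}}$ acting jointly in $(x,\phi)$, equivalently (Deuschel--Delmotte form, used in~\cite{DD05}) it yields
\begin{equation*}
  \operatorname{Cov}_{\mu_\Lambda^\eta}\big(\phi(x),\phi(y)\big) = \int_0^\infty \big\langle\, p_t^{\,\mathbf{a}(\phi_\cdot)}(x,y)\,\big\rangle\, dt,
\end{equation*}
a random walk in the \emph{time-dependent} environment $\mathbf{a}(\phi_t)=V''(\nabla\phi_t)$ driven by the stationary Langevin dynamics. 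The repair is routine given your toolbox: Nash--Aronson Gaussian upper bounds hold for time-dependent uniformly elliptic environments (exactly the regime of Proposition~\ref{paraNash} and~\cite{delmotte1997inegalite}), so integrating the heat kernel bound in $t$ still gives $C/|x-y|_+^{d-2}$ in $d\geq 3$. You should just replace the static Green's function step with the dynamic representation.
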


The next statement we collect is a covariance inequality for functions of the random field. Estimates of this nature were used in stochastic homogenization by Naddaf and Spencer~\cite{NS2}, and then by Gloria and Otto~\cite{GO1, GO2} to obtain optimal quantitative estimates on the first-order corrector. Lemma~\ref{prop5.5} below can, in particular, be deduced from~\cite[Lemma 3]{GO2}. In the setting of random-field random surfaces, it was used by Cotar and K\"{u}lske in~\cite[Proposition 2.4]{CK15}.

\begin{lemma} \label{prop5.5}
Assume that $(\eta(x))_{x \in \Zd}$ are independent with $\E [ \eta(x)^2 ] =1$ and fix a box $\Lambda \subseteq \Zd$. 
For any pair $f , g$ of Lipschitz and differentiable functions of the collection $\left( \eta_v \right)_{v \in \Lambda} \in \R^{\Lambda}$, one has the estimate
    \begin{equation*}
        \left| \E \left[f  g\right] - \E \left[f  \right] \E \left[g\right] \right| \leq \sum_{z \in \Lambda} \sup_{\eta \in \R^{\Lambda} } \left| \frac{\partial f}{\partial \eta(z)} \right| \times  \sup_{\eta \in \R^{\Lambda} } \left| \frac{\partial g}{\partial \eta(z)} \right|.
    \end{equation*}
\end{lemma}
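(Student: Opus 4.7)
The plan is to establish the covariance bound by a standard martingale decomposition along an arbitrary enumeration of the coordinates of $\Lambda$, combined with a coupling (independent-copy) representation of the single-coordinate conditional covariance.

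Enumerate the vertices of $\Lambda$ as $z_1, \ldots, z_n$ and set $\F_k := \sigma(\eta(z_1), \ldots, \eta(z_k))$, with $\F_0$ the trivial $\sigma$-algebra and $\F_n$ the full one. Define the martingale differences
\begin{equation*}
    D_k f := \E[f \mid \F_k] - \E[f \mid \F_{k-1}], \qquad D_k g := \E[g \mid \F_k] - \E[g \mid \F_{k-1}],
\end{equation*}
so that $f - \E f = \sum_{k=1}^n D_k f$ and similarly for $g$. Because the coordinates $\eta(z_k)$ are independent, the differences $D_k f$ are orthogonal to any $\F_{k-1}$-measurable function; applying this to $D_j g$ with $j<k$ (and symmetrically for $j>k$) yields the identity
\begin{equation*}
    \E[fg] - \E[f]\E[g] = \sum_{k=1}^{n} \E\!\left[ D_k f \cdot D_k g \right].
\end{equation*}

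Next, fix $k$ and write $h_k(\eta_1,\ldots,\eta_k) := \E[f \mid \F_k]$ and $h_k^g(\eta_1,\ldots,\eta_k) := \E[g \mid \F_k]$, so that $D_k f = h_k - \E_{\eta(z_k)} h_k$ and similarly for $D_k g$. Let $\eta'(z_k)$ be an independent copy of $\eta(z_k)$ and denote by $h_k^{(k)}$ the function $h_k$ evaluated after replacing $\eta(z_k)$ by $\eta'(z_k)$. The standard coupling identity for the conditional covariance gives
\begin{equation*}
    \E\!\left[ D_k f \cdot D_k g \mid \F_{k-1} \right] = \tfrac{1}{2}\, \E\!\left[ (h_k - h_k^{(k)})(h_k^g - (h_k^g)^{(k)}) \,\Big|\, \F_{k-1} \right].
\end{equation*}
The key pointwise estimate comes from the fundamental theorem of calculus: since $h_k$ is obtained from $f$ by integrating out the coordinates $\eta(z_{k+1}),\ldots,\eta(z_n)$, differentiation commutes with this integration, and
\begin{equation*}
    \left| h_k - h_k^{(k)} \right| \leq |\eta(z_k) - \eta'(z_k)| \cdot \sup_{\eta \in \R^\Lambda} \left| \frac{\partial f}{\partial \eta(z_k)} \right|,
\end{equation*}
and analogously for $h_k^g$ with $g$ in place of $f$.

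Combining these pointwise bounds with $\E[(\eta(z_k) - \eta'(z_k))^2] = 2\var(\eta(z_k)) \leq 2\,\E[\eta(z_k)^2] = 2$ yields
\begin{equation*}
    \left| \E\!\left[ D_k f \cdot D_k g \right] \right| \leq \sup_{\eta \in \R^\Lambda} \left| \frac{\partial f}{\partial \eta(z_k)} \right| \cdot \sup_{\eta \in \R^\Lambda} \left| \frac{\partial g}{\partial \eta(z_k)} \right|.
\end{equation*}
Summing over $k$ and using the triangle inequality on the martingale representation of $\E[fg]-\E[f]\E[g]$ established above gives exactly the claimed inequality. There is no real obstacle here: the only subtle point to state carefully is that the $\sup_\eta$ on the right-hand side dominates $|\partial h_k/\partial \eta(z_k)|$ because the conditional expectations $\E_{\eta(z_{k+1})}\cdots \E_{\eta(z_n)}$ commute with $\partial/\partial \eta(z_k)$ and an absolute value inside the integral can only increase its value.
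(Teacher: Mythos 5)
Your proof is correct. The paper itself does not write out an argument for Lemma~\ref{prop5.5}; it simply refers the reader to Gloria--Otto~\cite[Lemma~3]{GO2} (and to~\cite[Proposition~2.4]{CK15} for its use in the random-field random surface setting). You have supplied the standard proof that those references rely on: the telescoping martingale decomposition along an arbitrary ordering of the coordinates, the orthogonality of martingale increments (valid because the $\eta(z_k)$ are independent), the independent-copy identity $\cov(a(X),b(X)) = \tfrac12\,\E[(a(X)-a(X'))(b(X)-b(X'))]$ applied conditionally on $\F_{k-1}$, and the pointwise Lipschitz bound obtained by differentiating under the conditional-expectation integrals. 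The one place worth stating with care --- and you do --- is that $\partial h_k/\partial \eta(z_k)$ is controlled by $\sup_\eta |\partial f/\partial \eta(z_k)|$, which uses differentiation under the integral sign (justified by the Lipschitz assumption) and Jensen. You also correctly use only $\var(\eta(z_k)) \le \E[\eta(z_k)^2] = 1$ rather than assuming the mean is zero, so the factor of $\tfrac12$ from the coupling identity cancels exactly against $\E[(\eta(z_k)-\eta'(z_k))^2] \le 2$ and reproduces the constant $1$ in front of the sum. In short: the paper outsources this lemma to the literature, and your argument is the correct self-contained version of what that literature proves.
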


Using the two previous lemmas, we are now ready to show Lemma~\ref{lemma15061003}.

\begin{proof}[Proof of Lemma~\ref{lemma15061003}]
We first prove the following inequality: for any box $\Lambda \subseteq \Zd$ and any pair of vertices $x , y \in \Lambda$,
\begin{equation} \label{eq:ineqlemma5.3}
    \left| \E \left[ \left\langle \phi(x) \phi(y) \right\rangle_{\mu_\Lambda^\eta} \right] \right| \leq \frac{C}{|x - y|_+^{d-4}}.
\end{equation}
Using that the inequality~\eqref{eq:15231003} of Lemma~\ref{prop.thermalcorrelation} is valid for any realization of the random field, the estimate~\eqref{eq:ineqlemma5.3} is equivalent to the following upper bound: for any box $\Lambda \subseteq \Zd$ and any pair of points $x , y \in \Lambda$,
\begin{equation} \label{eq:16311003}
    \left| \E \left[ \left\langle \phi(x) \right\rangle_{\mu_\Lambda^\eta} \left\langle \phi(y) \right\rangle_{\mu_\Lambda^\eta} \right] \right| \leq \frac{C}{|x - y|_+^{d-4}}.
\end{equation}
To prove the inequality~\eqref{eq:16311003}, we note that the functions $\eta \mapsto \left\langle \phi(x) \right\rangle_{\mu_\Lambda^\eta}$ and $\eta \mapsto \left\langle \phi(y) \right\rangle_{\mu_\Lambda^\eta}$ depend only on the values of the field $\eta$ in the box $\Lambda$ and are differentiable. An explicit computation yields the formulae, for any point $z \in \Lambda$,
\begin{equation*}
    \frac{\partial \left\langle \phi(x) \right\rangle_{\mu_\Lambda^\eta}}{\partial \eta(z)} = \lambda \left\langle \phi(x) \phi(z) \right\rangle_{\mu_\Lambda^\eta} - \lambda \left\langle \phi(x) \right\rangle_{\mu_\Lambda^\eta} \left\langle  \phi(z) \right\rangle_{\mu_\Lambda^\eta}
\end{equation*}
and
\begin{equation*}
    \frac{\partial \left\langle \phi(y) \right\rangle_{\mu_\Lambda^\eta}}{\partial \eta(z)} = \lambda\left\langle \phi(y) \phi(z) \right\rangle_{\mu_\Lambda^\eta} -  \lambda\left\langle \phi(y) \right\rangle_{\mu_\Lambda^\eta} \left\langle  \phi(z) \right\rangle_{\mu_\Lambda^\eta}.
\end{equation*}
Applying Proposition~\ref{prop.thermalcorrelation}, we obtain that the functions $\eta \mapsto \left\langle \phi(x) \right\rangle_{\mu_\Lambda^\eta}$ and $\eta \mapsto \left\langle \phi(y) \right\rangle_{\mu_\Lambda^\eta}$ are Lipschitz and satisfy, for any vertex $z \in \Lambda$ and any realization of the random field $\eta : \Lambda \to \R$,
\begin{equation} \label{eq:17241003}
    \left| \frac{\partial \left\langle \phi(x) \right\rangle_{\mu_\Lambda^\eta}}{\partial \eta(z)} \right| \leq \frac{C}{|x - z|_+^{d-2}}  \hspace{3mm} \mbox{and} \hspace{3mm} \left| \frac{\partial \left\langle \phi(y) \right\rangle_{\mu_\Lambda^\eta}}{\partial \eta(z)} \right| \leq \frac{C}{|y - z|_+^{d-2}}.
\end{equation}
We can thus apply Lemma~\ref{prop.thermalcorrelation} and Lemma~\ref{prop5.5} with the functions $f : \eta \mapsto \left\langle \phi(x) \right\rangle_{\mu_\Lambda^\eta}$ and $g : \eta \mapsto \left\langle \phi(y) \right\rangle_{\mu_\Lambda^\eta}$, and note that the symmetry assumption on the random field implies the identities $\E [ \left\langle \phi(x) \right\rangle_{\mu_\Lambda^\eta} ] = \E [ \left\langle \phi(y) \right\rangle_{\mu_\Lambda^\eta} ] = 0$. We obtain
\begin{align*}
    \left| \E \left[ \left\langle \phi(x) \right\rangle_{\mu_\Lambda^\eta} \left\langle \phi(y) \right\rangle_{\mu_\Lambda^\eta} \right] \right| & \leq C \sum_{z \in \Lambda} \frac{1}{|x-z|_+^{d-2}} \frac{1}{|y-z|_+^{d-2}} \\
    & \leq  C \sum_{z \in \Zd} \frac{1}{|x-z|_+^{d-2}} \frac{1}{|y-z|_+^{d-2}} \\
    & \leq  \frac{C}{|x-y|_+^{d-4}}.
\end{align*}
The proof of the inequality~\eqref{eq:16311003}, and thus of~\eqref{eq:ineqlemma5.3}, is complete. We next prove the inequality~\eqref{eq:08381204}.
To this end, we use the inequality~\eqref{eq:ineqlemma5.3} and write
\begin{align} \label{eq:15002903}
   \E \left[\left\langle \left| \frac{1}{\left| \Lambda_0 \right|}\sum_{y \in  \Lambda_0} \phi(y) \right|^2 \right\rangle_{\mu_{\Lambda}^\eta} \right] & = \E \left[\left\langle \left| \frac{1}{\left| \Lambda_0 \right|}\sum_{y \in  \Lambda_0}  \phi(y) \right|^2 \right\rangle_{\mu_{\Lambda}^\eta} \right]
   \\ 
   & = \frac{1}{\left| \Lambda_0 \right|^2} \sum_{y , z \in \Lambda_0} \E \left[\left\langle  \phi(y) \phi(z) \right\rangle_{\mu_{\Lambda}^\eta} \right] \notag \\
   & \leq \frac{C}{\ell^{2d}} \sum_{y , z \in \Lambda_0} \frac{1}{|y - z|_+^{d-4}} \notag \\
   & \leq \frac{C}{\ell^{d-4}}. \notag
\end{align}
The proof of Lemma~\ref{lemma15061003} is complete.
\end{proof}

\subsection{A general coupling statement} \label{Section4.2}

In this section, we establish a general coupling statement to estimate the Wasserstein distance between the two measures $\mu_{\Lambda_0}^\eta$, $\mu_{\Lambda_1}^\eta$where $\Lambda_0, \Lambda_1$ are two boxes of comparable size and large intersection.

\begin{proposition} \label{p.generalGNMcoupling22}
Fix a dimension $d \geq 5$, let $\lambda > 0$, and assume that $(\eta(x))_{x \in \Zd}$ are independent with $\E [ \eta(x)^2 ] =1$ and a symmetric distribution. There exists an exponent $\alpha:= \alpha(d , c_+ , c_-)  >0$ such that the following statement holds. Let $L \geq 2$ be an integer and let $\Lambda_0 , \Lambda_1 \subseteq \Zd$ be two boxes such that $\Lambda_L \subseteq \Lambda_0 \cap \Lambda_1$ and $\Lambda_0 \cup \Lambda_1 \subseteq \Lambda_{4L}$. Then, one has the estimate
\begin{equation*}
    \E \left[ W_{\Omega} \left( \mu_{\Lambda_0}^\eta , \mu_{\Lambda_1}^\eta  \right)^2 \right] \leq 
    C L^{-\alpha}.
\end{equation*}
\end{proposition}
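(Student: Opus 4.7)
\noindent\textbf{Proof proposal for Proposition~\ref{p.generalGNMcoupling22}.}
The plan is to extend the ground-state strategy outlined in Section~\ref{section1.3.4} to the finite-temperature setting, by combining the Langevin coupling used in the proof of Proposition~\ref{p.generalGNMcoupling} with the decorrelation estimate of Lemma~\ref{lemma15061003}. Concretely, I would couple the two measures $\mu_{\Lambda_0}^\eta$ and $\mu_{\Lambda_1}^\eta$ by running the Langevin dynamics on $\Lambda_0$ and $\Lambda_1$ driven by the same family of Brownian motions, started from independent samples of the equilibrium measures as in~\eqref{eq:1106040822}--\eqref{eq:1107040822}. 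By stationarity, for every $T \geq 0$ the pair $(\phi_{0,T},\phi_{1,T})$ is a coupling of $\mu_{\Lambda_0}^\eta$ and $\mu_{\Lambda_1}^\eta$, and fixing $T = L^2$ allows reuse of the parabolic gradient bound~\eqref{eq:14490704}.

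Next, I would use the Neumann Green's function representation from Appendix~\ref{app.B1}: for every vertex $x \in \Lambda_{L/4}$ and every mesoscopic scale $\ell$ with $\ell \leq L/8$, there is a map $\mathbf{f}_{x + \Lambda_\ell} : \vec{E}(x+\Lambda_\ell) \to \R$ uniformly bounded by a dimensional constant such that the difference $w(T,x) := \phi_{0,T}(x) - \phi_{1,T}(x)$ admits the decomposition
\begin{equation*}
w(T,x) = \sum_{e \in \vec{E}(x + \Lambda_\ell)} \mathbf{f}_{x + \Lambda_\ell}(e)\,\nabla w(T,e) + \frac{1}{|\Lambda_\ell|}\sum_{y \in x + \Lambda_\ell} w(T,y).
\end{equation*}
Since $x + \Lambda_\ell \subseteq \Lambda_{L/2}$, Cauchy-Schwarz, $|\mathbf{f}| \leq C$, and~\eqref{eq:14490704} control the first summand in $L^2$ by $C \ell^{2d} L^{-\alpha}$; Lemma~\ref{lemma15061003}, applied separately to $\phi_{0,T}$ and $\phi_{1,T}$ on the translated cube $x + \Lambda_\ell$ (which sits inside both $\Lambda_0$ and $\Lambda_1$), controls the spatial-average summand by $C\,\ell^{-(d-4)}$. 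Optimizing over $\ell$ by taking a small fractional power of $L$ (for instance $\ell \simeq L^{\alpha/(3d-4)}$) yields the pointwise bound
\begin{equation*}
\sup_{x \in \Lambda_{L/4}} \E\!\left\langle w(T,x)^2 \right\rangle \leq C L^{-\alpha'}
\end{equation*}
for some new exponent $\alpha' > 0$ depending only on $d, c_+, c_-$.

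To convert this pointwise estimate into a bound on $\E[W_\Omega(\mu_{\Lambda_0}^\eta,\mu_{\Lambda_1}^\eta)^2]$, I would plug the coupling $(\phi_{0,T},\phi_{1,T})$ into the definition of $W_\Omega$ and write
\begin{equation*}
\E\!\left[W_\Omega(\mu_{\Lambda_0}^\eta,\mu_{\Lambda_1}^\eta)^2\right] \leq \sum_{x \in \Zd} e^{-|x|}\,\E\!\left\langle (\phi_{0,T}(x)-\phi_{1,T}(x))^2 \right\rangle,
\end{equation*}
and split the sum. For $x \in \Lambda_{L/4}$ the preceding step contributes at most $C L^{-\alpha'}$ times a summable weight. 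For $x$ outside $\Lambda_{L/4}$ (but still inside $\Lambda_{4L}$, where $w$ can be nonzero) I would use the crude pointwise bound $\E\langle \phi_{i,T}(x)^2 \rangle \leq C$ provided by Lemma~\ref{2.111825} together with the exponential weight $e^{-|x|} \leq e^{-L/4}$, making this tail contribution negligible compared to $L^{-\alpha'}$.

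The delicate point, and in a sense the only genuine obstacle, is the use of Lemma~\ref{lemma15061003}: the $\ell^{-(d-4)}$ decay of the averaged surface is what allows us to trade the mesoscopic scale $\ell$ against the De Giorgi-Nash-Moser decay $L^{-\alpha}$ of the gradient. In dimension $d = 4$ this bound degenerates to a constant and the balancing argument collapses, which is why the restriction $d \geq 5$ is intrinsic to the approach and matches exactly the hypothesis of the proposition.
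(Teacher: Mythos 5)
Your proposal is correct and follows essentially the same route as the paper: it uses the Langevin coupling of Proposition~\ref{p.generalGNMcoupling} (which you re-derive explicitly), the Neumann Green's function decomposition of Proposition~\ref{propB1}, the decorrelation estimate of Lemma~\ref{lemma15061003}, and an optimization over the mesoscopic scale $\ell$. The only substantive difference is that you spell out the last step (passing from the pointwise bound $\sup_{x\in\Lambda_{L/4}}\E\langle w(T,x)^2\rangle \leq CL^{-\alpha'}$ to the Wasserstein bound by splitting the weighted sum and invoking Lemma~\ref{2.111825} plus the exponential weight for $|x|\geq L/4$), which the paper compresses into the phrase ``a combination of Lemma~\ref{2.111825} and~\eqref{eq:16491204} shows.''
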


\begin{proof}
We fix an integer $L \geq 2$, two boxes $\Lambda_0 , \Lambda_1$ satisfying the assumptions of the proposition,  and let $\nu^\eta$ be the coupling between the measures $\mu_{\Lambda_0}^\eta$ and $\mu_{\Lambda_1}^\eta$ provided by Proposition~\ref{p.generalGNMcoupling}.
By Proposition~\ref{propB1}, we know that, for any $x \in \Zd$ and any $\ell \in \N$, there exists a function $\mathbf{f}_{x + \Lambda_\ell} : \vec{E} \left( x + \Lambda_\ell \right) \to \R$ and a constant $C := C(d) < \infty$ such that, for any $ \phi : (x + \Lambda_\ell) \to \R$,
 \begin{equation} \label{eq:1431080444}
       \phi(x) =  \sum_{e \in \vec{E} \left( x + \Lambda_\ell \right)} \mathbf{f}_{x + \Lambda_\ell}(e) \nabla \phi (e)  + \frac{1}{\left| \Lambda_\ell \right|} \sum_{y \in (x + \Lambda_\ell)} \phi(y),
    \end{equation}
and, for any edge $e \in \vec{E} \left( x + \Lambda_\ell\right),$
\begin{equation} \label{eq:14320804}
    \left| \mathbf{f}_{x + \Lambda_\ell}(e) \right| \leq C.
\end{equation}
Let $\ell$ be integer such that $\ell \leq L/4$ and fix a vertex $x \in \Lambda_{L/4}$. Noting that $x + \Lambda_{\ell} \subseteq \Lambda_{L/2}$, we can apply the identity~\eqref{eq:1431080444} and the inequality~\eqref{eq:14320804} with the box $\Lambda := (x + \Lambda_{\ell})$. We obtain, for any pair of functions $\phi_0, \phi_1 : \Lambda_L \to \R$,
\begin{align} \label{eq:14012903}
     \left| \phi_0(x) - \phi_1(x) \right|&  \leq C \left| \Lambda_{\ell} \right|  \sup_{e \in E \left( x + \Lambda_{\ell}\right) } \left| \nabla \phi_0(e) - \nabla \phi_1 (e) \right| \\ & \quad + \left| \frac{1}{\left| \Lambda_{\ell} \right|}\sum_{y \in  (x +\Lambda_{\ell})} \phi_0(y) \right| + \left| \frac{1}{\left| \Lambda_{\ell} \right|}\sum_{y \in  (x +\Lambda_{\ell})} \phi_1(y) \right|. \notag
\end{align}
Taking the expectation with respect to measure $\nu^\eta$ (using that the marginals of this distribution are $\mu_{\Lambda_0}^\eta$ and $\mu_{\Lambda_1}^\eta$) and then over the disorder $\eta$, we obtain
\begin{align*}
    \lefteqn{ \sup_{x \in \Lambda_{L/4}}\E \left[ \int_{\Omega \times \Omega} \left| \phi_0(x) - \phi_1(x)  \right|^2 \nu^\eta (d\phi_0 , d\phi_1) \right]} \qquad & \\ & \leq C \ell^d \E \left[ \int_{\Omega \times \Omega} \sup_{e \in E \left( \Lambda_{L/2}\right) } \left| \nabla \phi_0(e) - \nabla \phi_1(e)  \right|^2 \nu^\eta (d\phi_0 , d\phi_1) \right] \\
    & \quad +3 \sup_{x \in \Lambda_{L/4}} \E \left[ \left\langle \left| \frac{1}{\left| \Lambda_{\ell} \right|}\sum_{y \in  (x +\Lambda_{\ell})} \phi(y) \right|^2 \right\rangle_{\mu_{\Lambda_0}^\eta} \right] + 3\sup_{x \in \Lambda_{L/4}} \E \left[ \left\langle \left| \frac{1}{\left| \Lambda_{\ell} \right|}\sum_{y \in  (x +\Lambda_{\ell})} \phi(y) \right|^2 \right\rangle_{\mu_{\Lambda_{1}^{\eta}}} \right].
\end{align*}
The first term in the right-hand side can be estimated thanks to Proposition~\ref{p.generalGNMcoupling}, the second and third terms can be bounded from above by using Lemma~\ref{lemma15061003}. We obtain
\begin{equation*}
    \sup_{x \in \Lambda_{L/4}} \E \left[ \int_{\Omega \times \Omega} \left| \phi_0(x) -  \phi_1(x)  \right|^2 \nu^\eta (d\phi_0 , d\phi_1) \right] \leq C \ell^d L^{-\alpha} + C\ell^{4-d}.
\end{equation*}
Optimising over the value of $\ell$ (for instance choosing $\ell = L^{\alpha/(2d)}$) and reducing the value of the exponent $\alpha$, we deduce that
\begin{equation} \label{eq:16491204}
    \sup_{x \in \Lambda_{L/4}} \E \left[ \int_{\Omega \times \Omega} \left| \phi_0(x) -  \phi_1(x)  \right|^2 \nu^\eta (d\phi_0 , d\phi_1) \right] \leq C L^{-\alpha}.
\end{equation}
A combination of Lemma~\ref{2.111825} and~\eqref{eq:16491204} shows
\begin{equation*}
    \E \left[ W_{\Omega} \left( \mu_{\Lambda_0}^\eta ,  \mu_{\Lambda_{1}}^\eta  \right)^2 \right] \leq C L^{-\alpha}.
\end{equation*}
\end{proof}

\subsection{Proof of Theorem~\ref{transcovgibbsstates}} \label{section4.3}

This section contains the proof of Theorem~\ref{transcovgibbsstates} based on the coupling statement established in Section~\ref{Section4.2}.

\begin{proof}[Proof of Theorem~\ref{transcovgibbsstates}]
Fix an integer $n \in \N$. Applying Proposition~\ref{p.generalGNMcoupling22} with $L = 2^n$, the boxes $\Lambda_0 = \Lambda_{2^n}$ and $\Lambda_1 = \Lambda_{2^{n+1}}$, we obtain
\begin{equation} \label{eq:14490903}
   \E \left[ W_{\Omega} \left( \mu_{\Lambda_{2^n}}^\eta ,  \mu_{\Lambda_{2^{n+1}}}^\eta  \right)^2 \right] 
   \leq C 2^{-\alpha n}.
\end{equation}
The inequality~\eqref{eq:14490903} implies that the sequence of measures $( \mu_{\Lambda_{2^n}}^\eta )_{n \in \N}$ is almost surely Cauchy in the metric space $(\mathcal{P}_2\left(\Omega_1\right) , W_{\Omega})$. Since this metric space is complete, it implies that it converges almost surely to a probability distribution $\mu^\eta \in \mathcal{P}_2\left(\Omega_1\right)$. It remains to verify that the measure $\mu^\eta$ satisfies the properties listed in the statement of Theorem~\ref{transcovgibbsstates}.
First, the inequality~\eqref{eq:14490903} implies the estimate: for any $L \geq 2$,
\begin{equation} \label{eq:16091204}
     \E \left[  W_{\Omega} \left( \mu_{\Lambda_L}^\eta ,  \mu^\eta  \right)^2 \right] \leq CL^{-\alpha}.
\end{equation}
We then fix an integer $L \geq 2$ and let $n$ be the integer satisfying $2^{n - 1} < L \leq 2^n$. Applying Proposition~\ref{p.generalGNMcoupling22} with the boxes $\Lambda_0 := \Lambda_L$ and $\Lambda_1 := \Lambda_{2^n}$, we obtain
\begin{equation} \label{eq:15350903}
    \E \left[  W_{\Omega} \left( \mu_{\Lambda_{2^n}}^\eta ,  \mu_{\Lambda_{L}}^\eta  \right)^2 \right] \leq CL^{-\alpha}.
\end{equation}
Consequently, by the triangle inequality for the Wasserstein metric,
\begin{equation*}
    \E \left[  W_{\Omega} \left( \mu_{\Lambda_{L}}^\eta ,  \mu^\eta  \right)^2 \right] \leq 2\E \left[  W_{\Omega} \left( \mu_{\Lambda_{2^n}}^\eta ,  \mu_{\Lambda_{L}}^\eta  \right)^2 \right] + 2\E \left[  W_{\Omega} \left( \mu_{\Lambda_{2^n}}^\eta ,  \mu^\eta  \right)^2 \right] \\
     \leq C L^{-\alpha}.
\end{equation*}
The proof of~\eqref{eq:16091204} is complete.

The proofs of the translation covariance~\eqref{eq:09481104} and the Dobrushin–Lanford–Ruelle (DLR) equation~\eqref{eq:11521104} for the Gibbs states $\mu^\eta$ are a notational modification of the corresponding proofs for the gradient Gibbs states $\mu_{\nabla}^\eta$ established in Section~\ref{section3.2}. We thus omit the details.

We finally prove the quantitative coupling estimate~\eqref{eq:09471104}. Using~\eqref{eq:16491204} and the gluing lemma (see~\cite[Chapter 1]{Vil}), we obtain that there exists a coupling $\pi^\eta$ between the measures $\left( \mu_{\Lambda_{2^k L}}^\eta \right)_{k \geq 0}$ such that
\begin{equation} \label{17151204}
    \sup_{x \in 2^{k-1} L} \E \left[ \left\langle \left|\phi_{2^kL}(x) -  \phi_{2^{k+1}L}(x) \right|^2 \right\rangle_{\pi^\eta} \right] \leq C (2^k L)^{-\alpha}, 
\end{equation}
where $\left\langle \cdot \right\rangle_{\pi^\eta}$ denotes the expectation with respect to $\pi^\eta$ and $\phi_{2^k L}(x)$ is the height of the surface at the vertex~$x$ for the measure $\mu_{\Lambda_{2^kL}}^\eta$. The inequality~\eqref{eq:19071902} implies that, for almost every $\eta$, the random variables $\phi_{2^k L}$ converge pointwise almost surely with respect to the measure $\pi^\eta$ to a random variable $\phi_\infty$ distributed according to $\mu^\eta$. Summing the inequality~\eqref{17151204} over the integers $k \in \N$ and using the triangle inequality as in~\eqref{eq:20091702} completes the proof of~\eqref{eq:09471104}.
\end{proof}

\appendix

\section{Gradient of the Neumann Green's function} \label{app.B1}

In this section, we establish the pointwise estimate on the gradient of the Neumann Green's function which is used in the proof of Proposition~\ref{p.generalGNMcoupling22}. The statement and proof are well-known in the literature and given here for the reader's convenience.

\begin{proposition} \label{propB1}
    Fix a dimension $d \geq 3$. There exists a constant $C := C(d) < \infty$ such that the following holds. Fix $x \in \Zd$, $\ell \in \N$ and set $\Lambda = x + \Lambda_\ell$. There exists a function $\mathbf{f}_{\Lambda} : \vec{E} \left( \Lambda \right) \to \R$ satisfying the following properties: for any function $\phi : \Lambda \to \R$,
    \begin{equation} \label{eq:14310804}
        \sum_{e \in \vec{E} \left( \Lambda \right)} \mathbf{f}_{\Lambda}(e) \nabla \phi (e) = \phi(x) - \frac{1}{\left| \Lambda \right|} \sum_{x \in \Lambda} \phi(y) \hspace{5mm} \mbox{and} \hspace{5mm} \forall e \in \vec{E} \left( \Lambda\right), ~ \left| \mathbf{f}_{\Lambda}(e) \right| \leq C.
    \end{equation}
\end{proposition}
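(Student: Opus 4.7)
The proof will be constructive. The idea is to write $\phi(x) - \bar\phi_\Lambda$ (where $\bar\phi_\Lambda$ denotes the average of $\phi$ over $\Lambda$) as $|\Lambda|^{-1} \sum_{y \in \Lambda}(\phi(x) - \phi(y))$ and then to express each difference $\phi(x) - \phi(y)$ as a telescoping sum of discrete gradients along a carefully chosen lattice path from $x$ to $y$ inside $\Lambda$. Up to a curl-free term, the resulting coefficient is precisely the gradient of the discrete Neumann Green's function in $\Lambda$ with pole at $x$, which justifies the section title; however, since the proposition only asserts the existence of some bounded $\mathbf{f}_\Lambda$, the constructive approach sidesteps the need to develop any Green's function machinery.

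Here is the plan in detail. Fix an ordering $1, \ldots, d$ of the coordinate directions. For each $y \in \Lambda$, let $\gamma_{x \to y}$ denote the axis-aligned staircase path from $x$ to $y$ obtained by first changing the first coordinate of $x$ to $y_1$, then the second coordinate to $y_2$, and so on. Since $\Lambda = x_0 + \Lambda_\ell$ is a box containing both $x$ and $y$, this path is entirely contained in $\Lambda$ and has length at most $d(2\ell+1)$. For each directed edge $e \in \vec E(\Lambda)$, set
\begin{equation*}
    \mathbf{f}_\Lambda(e) := -\frac{1}{|\Lambda|}\sum_{y \in \Lambda} \mathbf{1}\{e \in \gamma_{x \to y}\},
\end{equation*}
where the indicator is $1$ if and only if the path $\gamma_{x \to y}$ traverses $e$ in the orientation of $e$. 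Telescoping along each path yields $\phi(y) - \phi(x) = \sum_{e \in \vec E(\Lambda)} \mathbf{1}\{e \in \gamma_{x \to y}\}\,\nabla\phi(e)$, and averaging over $y \in \Lambda$ gives the identity in \eqref{eq:14310804}.

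It remains to prove the pointwise bound on $\mathbf{f}_\Lambda$. For a given directed edge $e = (z, z + e_i)$ in the $i$-th coordinate direction, consider $Y(e) := \{y \in \Lambda : e \in \gamma_{x \to y}\}$. By the staircase construction, the coordinates $(z_{i+1}, \ldots, z_d)$ must equal $(x_{i+1}, \ldots, x_d)$ (otherwise the path has either already modified those coordinates or will not yet traverse the $i$-th block at position $z$), and for any $y \in Y(e)$ the first $i-1$ coordinates of $y$ are forced to match $(z_1, \ldots, z_{i-1})$, while $y_i$ lies in the half-interval of $\Lambda$ determined by $z_i$ relative to $x_i$ and the remaining coordinates $y_{i+1}, \ldots, y_d$ range freely. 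A direct count gives $|Y(e)| \leq (2\ell+1)^{d-i+1}$, so $|\mathbf{f}_\Lambda(e)| \leq (2\ell+1)^{1-i} \leq 1$ for every directed edge $e$, completing the proof with $C = 1$. No step of this argument presents a genuine obstacle; the only piece requiring care is the choice of path family, and the staircase construction is specifically engineered so that each edge is used by at most $|\Lambda|$ paths.
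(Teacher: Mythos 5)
Your proof is correct, and it takes a genuinely different route from the paper's. The paper defines $\mathbf{f}_\Lambda = \nabla G_\Lambda$ where $G_\Lambda$ is the Neumann Green's function of $\Lambda$ with pole at $x$, obtained as the minimizer of the variational problem $\inf_g \tfrac12\sum_e |\nabla g(e)|^2 - g(x)$ over mean-zero $g$; the key ingredient is the discrete Sobolev inequality, which gives $\sup_z |g(z)| \leq C(\sum_e |\nabla g(e)|^2)^{1/2}$ for mean-zero $g$, and it is precisely this step that forces the restriction $d\geq 3$ in the hypothesis. Your approach instead writes $\phi(x) - \bar\phi_\Lambda = |\Lambda|^{-1}\sum_y(\phi(x)-\phi(y))$, telescopes each difference along a staircase path, and sets $\mathbf{f}_\Lambda(e)$ equal to (minus) the normalized count of paths traversing $e$ in its orientation. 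Your edge-counting argument is correct: for a directed edge $e=(z,z+e_i)$, the set $Y(e)$ of $y$ whose path uses $e$ requires $z_j=x_j$ for $j>i$, forces $y_j=z_j$ for $j<i$, and leaves at most $(2\ell+1)^{d-i+1}$ choices for $(y_i,\ldots,y_d)$, giving $|\mathbf{f}_\Lambda(e)| \leq (2\ell+1)^{1-i}\leq 1$. This is more elementary, yields the explicit constant $C=1$ (better than the paper's dimension-dependent $C(d)$), and in fact works for every $d\geq 1$ and with the pole $x$ being any vertex of the box rather than its center. What the paper's variational route buys in exchange is the identification of $\mathbf{f}_\Lambda$ with the gradient of the Neumann Green's function (which motivates the appendix's title and is the conceptually canonical object here), whereas your $\mathbf{f}_\Lambda$ is a different, path-dependent representative in the same cohomology class -- both satisfy the proposition, since only existence is asserted. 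One minor notational slip: you write $\Lambda = x_0 + \Lambda_\ell$ while the statement uses $x$ itself as the center, but since your argument never uses that $x$ is the center this is harmless.
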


\begin{proof}
The usual version of the Sobolev inequality on $\Zd$ (which follows easily from the one on $\Rd$ by interpolation) states that there exists a constant $C := C(d) < \infty$ such that, for any function $g : \Lambda  \to \R$ satisfying $ \sum_{x \in \Lambda} g(x) = 0$,
\begin{equation*}
    \left( \sum_{y \in \Lambda} \left| g(y) \right|^{\frac{2d}{d-2}} \right)^{\frac{d-2}{2d}} \leq C \left(  \sum_{e \in \vec{E} \left( \Lambda\right)} \left| \nabla g(e)\right|^2\right)^\frac12.
\end{equation*}
In the discrete setting, we additionally have, for any function $g : \Lambda \to \R$ and any vertex $z \in \Lambda$,
\begin{equation*}
    | g(z)|  \leq \left( \sum_{y \in \Lambda} \left| g(y) \right|^{\frac{2d}{d-2}} \right)^{\frac{d-2}{2d}}.
\end{equation*}
A combination of the two previous displays yields, for any function $g : \Lambda  \to \R$ satisfying $ \sum_{y \in \Lambda} g(y) = 0$,
\begin{equation} \label{eq:14080804}
    \sup_{z \in \Lambda} \left|  g(z)\right| \leq C \left(  \sum_{e \in \vec{E} \left( \Lambda\right)} \left| \nabla g(e)\right|^2\right)^\frac12.
\end{equation}
Consider the variational problem
\begin{equation*}
    \inf_{\substack{g : \Lambda \to \R \\ \sum_{y \in \Lambda} g(y) = 0}} \frac 12 \sum_{e \in \vec{E}\left( \Lambda \right)} \left| \nabla g(e) \right|^2 - g(x).
\end{equation*}
From~\eqref{eq:14080804}, the minimum exists and is uniquely attained. We denote it by $G_{\Lambda}$ and set $\mathbf{f}_\Lambda = \nabla G_\Lambda$. By the  first variations, we have, for any function $g : \Lambda  \to \R$ satisfying $ \sum_{y \in \Lambda} g(y) = 0$,
\begin{equation} \label{eq:14300804}
    \sum_{e \in \vec{E} \left( \Lambda \right)} \mathbf{f}_{\Lambda}(e) \nabla g (e) = g(x).
\end{equation}
Consider a function $\phi : \Lambda \to \R$. Applying the identity~\eqref{eq:14300804} with the function $g := \phi - |\Lambda|^{-1}\sum_{y \in \Lambda} \phi(y) $ shows the first identity of~\eqref{eq:14310804}. We then use the inequality~\eqref{eq:14080804}, combine it with the identity~\eqref{eq:14300804} (applied with the function $g = G_{\Lambda}$) and obtain
\begin{equation} \label{eq:14400804}
    \sup_{z \in \Lambda} |G_{\Lambda}(z)| \leq C \left( \sum_{e \in \vec{E} \left( \Lambda \right)} \left| \nabla G_\Lambda (e)\right|^2 \right)^{\frac 12} = C |G_{\Lambda}(x)|^\frac 12 \leq C \left( \sup_{z \in \Lambda} |G_{\Lambda}(z)| \right)^\frac 12.
\end{equation}
A rearrangement of the inequality~\eqref{eq:14400804} yields $\sup_{z \in \Lambda} |G_{\Lambda}(z)| \leq C$. Consequently, for any edge $e = \{ y , z \} \in E \left( \Lambda \right),$ $|\mathbf{f}_{\Lambda} (e)| = \left| G_{\Lambda}(y) - G_{\Lambda}(z) \right| \leq 2C$.
\end{proof}

\section{De Giorgi-Nash-Moser regularity} \label{app.B2}

In this section, we prove the version of the De Giorgi-Nash-Moser regularity estimate for solutions of parabolic equations stated in Proposition~\ref{paraNash}. The result is again standard and well-known in the literature (see~\cite[Chapter 8]{GT},~\cite[Chapter 3]{han2011elliptic}), and we present a short proof for completeness. We mention that a closely related version of the result was established in the discrete setting considered here and used to study the $\nabla \phi$-model (without disorder) in~\cite[Appendix B]{GOS}.

We first collect two preliminary results: the mean value inequality  and the increase of oscillations for solutions of parabolic equations. In the continuous setting, these results can be found in~\cite{moser1964harnack}. In the discrete setting, we refer to Proposition 3.2 and Proposition 3.4 of~\cite{DD05}. In both statements, we fix an environment $\a : [0 , \infty] \times \Zd \to \R$ which is assumed to satisfy the uniform ellipticity assumption $c_- \leq \a \leq c_+$.

\begin{proposition}[Mean value inequality, Proposition 3.2 of~\cite{DD05}] \label{mean value inequality}
There exists a constant $C:= C(d , c_+ , c_-) < \infty$ such that, for  any $L \geq 1$, $t \geq 0$, $x \in \Zd$, and any function $u : [t , t + 4L^2] \times (x + \Lambda_{2L}) \to \R$ solution of the parabolic equation
\begin{equation*}
    \partial_t u - \nabla \cdot \a \nabla u = 0 ~\mbox{in}~[t , t + 4L^2] \times (x + \Lambda_{2L}),
\end{equation*}
one has the estimate
\begin{equation*}
    \sup_{[t + 3 L^2 , t + 4L^2] \times (x + \Lambda_L)}  u^2 \leq \frac{C}{L^{d +2}} \int_{t}^{t + 4L^2} \sum_{y \in x + \Lambda_{2L}} \left| u(t , y) \right|^2 \, dt.
\end{equation*}
\end{proposition}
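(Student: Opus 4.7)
The proof is by the classical Moser iteration scheme in the discrete parabolic setting. By translation it suffices to treat $x = 0$, $t = 0$. Setting $v := u^2$, a direct summation by parts shows
\begin{equation*}
\partial_t v - \nabla \cdot \a \nabla v \;=\; -\sum_{y \sim x} \a(\{x, y\})\, (u(y) - u(x))^2 \;\leq\; 0,
\end{equation*}
so $v \geq 0$ is a non-negative subsolution and it suffices to prove $\sup_{Q_\infty} v \leq C L^{-(d+2)} \int_{Q_0} v$, where $Q_0 := [0, 4 L^2] \times \Lambda_{2 L}$, $Q_\infty := [3 L^2, 4 L^2] \times \Lambda_L$, and I interpolate via the nested parabolic cylinders
\begin{equation*}
Q_k := \bigl[(3 - 3 \cdot 2^{-k}) L^2,\, 4 L^2\bigr] \times \Lambda_{L (1 + 2^{-k})}.
\end{equation*}

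\textbf{Caccioppoli energy bound.} Choose a space-time cutoff $\zeta_k$ supported in $Q_k$, equal to $1$ on $Q_{k+1}$, with $|\partial_t \zeta_k^2| + |\nabla \zeta_k|^2 \leq C\, 4^k / L^2$. For $q \geq 1$, test the subsolution inequality against $\zeta_k^2 v^{q-1}$; after summation by parts, using the pointwise convexity inequality
\begin{equation*}
(b - a)(b^{q-1} - a^{q-1}) \;\geq\; \frac{C}{q^2}\,\bigl(b^{q/2} - a^{q/2}\bigr)^2 \qquad (a, b \geq 0),
\end{equation*}
the uniform ellipticity $c_- \leq \a \leq c_+$, and Young's inequality on the cross terms involving $\nabla \zeta_k$, one obtains
\begin{equation*}
\sup_s \sum_y \zeta_k^2(s, y)\, v^q(s, y) \;+\; \int \sum_e \zeta_k^2 |\nabla v^{q/2}|^2 \, ds \;\leq\; \frac{C\, 4^k q^2}{L^2} \int_{Q_k} v^q \, ds.
\end{equation*}

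\textbf{Sobolev upgrade and iteration.} The discrete Sobolev inequality yields $\|\zeta_k v^{q/2}\|_{L^{2d/(d-2)}_x}^2 \leq C \|\nabla(\zeta_k v^{q/2})\|_{L^2_x}^2$ at each time. Combining with the $L^\infty_s L^2_x$ bound above via H\"older interpolation in space-time gives, with $\theta := (d + 2)/d$,
\begin{equation*}
\biggl( \int_{Q_{k+1}} v^{q \theta} \, ds \biggr)^{1/\theta} \;\leq\; \frac{C\, 4^k q^2}{L^2} \int_{Q_k} v^q \, ds.
\end{equation*}
Setting $q_k := \theta^k$ and $\Phi_k := \bigl( L^{-(d+2)} \int_{Q_k} v^{q_k}\, ds \bigr)^{1/q_k}$, this rewrites as $\Phi_{k+1} \leq (C\, 4^k q_k^2)^{1/q_k} \Phi_k$. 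The summability of $\sum_k q_k^{-1}(k \log 4 + 2 k \log \theta)$ implies that $\Phi_k$ stays within a universal multiple of $\Phi_0$, and letting $k \to \infty$ gives $\sup_{Q_\infty} v \leq C \Phi_0 = C L^{-(d+2)} \int_{Q_0} v$, which is the stated inequality.

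\textbf{Main obstacle.} The principal technical delicacy is the discrete substitute for the chain rule: in the continuum one has $\nabla v^{q-1} \cdot \nabla v = \frac{4(q-1)}{q^2} |\nabla v^{q/2}|^2$, whereas on the lattice the corresponding identity degenerates into the pointwise convexity inequality above, whose proof reduces to a scalar calculus bound on $\mathbb{R}_+$. The $q$-dependent factor $q^2$ that this introduces into the Caccioppoli estimate is exactly absorbed by the geometric factor $q_k^{-1} = \theta^{-k}$ in the iteration, so it does not obstruct convergence. All discrete summations by parts produce only boundary correction terms absorbable into $C := C(d, c_+, c_-)$, and the discrete structure is otherwise benign.
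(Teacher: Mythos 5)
The paper does not actually prove this proposition: it is quoted verbatim as Proposition 3.2 of Delmotte--Deuschel~\cite{DD05}, so there is no in-paper proof to compare against. Your Moser-iteration argument is the standard route to such a parabolic mean value inequality and, with one exception, it is sound: the observation that $v := u^2$ is a nonnegative subsolution is correct, the Caccioppoli-to-Sobolev interpolation with $\theta = (d+2)/d$ is the right parabolic exponent, and the bookkeeping $\Phi_{k+1} \leq (C\,4^k q_k^2)^{1/q_k}\Phi_k$ with $q_k = \theta^k$ converges precisely because $\sum_k q_k^{-1}\log(4^k q_k^2) < \infty$.

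The gap is the pointwise convexity inequality. The correct statement, obtained by writing $b^{q/2}-a^{q/2} = \int_a^b \tfrac{q}{2}s^{q/2-1}\,ds$ and applying Cauchy--Schwarz, is
\begin{equation*}
(b-a)\bigl(b^{q-1}-a^{q-1}\bigr) \;\geq\; \frac{4(q-1)}{q^2}\bigl(b^{q/2}-a^{q/2}\bigr)^2,
\end{equation*}
whose constant degenerates as $q \to 1$, and the claimed bound with an absolute $C/q^2$ is false at $q=1$ (the left side vanishes while the right does not). Since your iteration starts at $q_0 = \theta^0 = 1$, the very first Caccioppoli step cannot be derived from this inequality. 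The fix is standard and worth stating explicitly: for $q=1$ the test function is just $\zeta_0^2$, and one should use the \emph{exact} subsolution identity
\begin{equation*}
\partial_t v - \nabla\cdot\a\nabla v \;=\; -\sum_{y\sim x}\a(\{x,y\})\,(u(y)-u(x))^2,
\end{equation*}
whose right side, tested against $\zeta_0^2$, directly produces the energy term $\int\sum_e \a\,\zeta_0^2\,|\nabla u|^2$, which dominates $\int\sum_e \zeta_0^2\,|\nabla v^{1/2}|^2$ since $|\nabla |u|| \leq |\nabla u|$ (equivalently, test the original equation for $u$ against $\zeta_0^2 u$). For $k\geq 1$ one has $q_k \geq \theta > 1$, so the convexity inequality holds with constant $4(\theta-1)$, which depends only on $d$ and is admissible. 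With this patch the argument closes.
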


In the next proposition, we use the notation $\osc$ to refer to the oscillation of a function $u$ defined by the formula
\begin{equation} \label{def.osc}
    \osc_{[s , t] \times (x + \Lambda_L)} u := \sup_{ [s, t] \times (x + \Lambda_L)} u - \inf_{[s, t] \times (x + \Lambda_L)} u.
\end{equation}

\begin{proposition}[Increase of oscillations, Proposition 3.4 of~\cite{DD05}] \label{Increase of oscillations}
There exists a constant $\theta:= \theta(d , c_+ , c_-) > 1$ such that, for any $L \geq 1$, $t \geq 0$, $x \in \Zd$ and any solution $u : [t , t+ 4L^2] \times (x + \Lambda_{2L}) \to \R$ of the parabolic equation
\begin{equation*}
    \partial_t u - \nabla \cdot \a \nabla u = 0 ~\mbox{in}~[t , t+ 4L^2] \times (x + \Lambda_{2L}),
\end{equation*}
one has the estimate
\begin{equation}
    \osc_{[t , t+ 4L^2] \times (x + \Lambda_{2L})} u \geq \theta \osc_{[t+ 3 L^2 , t+ 4L^2] \times (x + \Lambda_L)} u .
\end{equation}
\end{proposition}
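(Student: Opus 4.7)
The plan is to implement the classical De Giorgi--Moser oscillation-decay scheme, adapted to the discrete-in-space setting. Set $Q := [t, t+4L^2] \times (x + \Lambda_{2L})$ and $Q' := [t+3L^2, t+4L^2] \times (x + \Lambda_L)$, and write $M := \sup_Q u$, $m := \inf_Q u$. The goal is to produce a constant $c := c(d, c_+, c_-) \in (0,1)$ for which $\osc_{Q'} u \leq (1 - c) \osc_Q u$; the proposition then follows with $\theta := 1/(1-c)$.

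First I would perform the standard dichotomy. Setting $k := (M+m)/2$, one of the two parabolic sublevel sets $\{u \leq k\} \cap Q$ or $\{u \geq k\} \cap Q$ carries at least half of the counting-times-Lebesgue mass of $Q$. Up to replacing $u$ by $M + m - u$ (which is also a solution, since the equation is linear and invariant under constants), I may assume it is the first. Consider then the auxiliary function $v := M - u$: by linearity of $\partial_t - \nabla \cdot \a \nabla$, the function $v$ is still a solution; by construction $v \geq 0$ on $Q$, $v \leq M-m$ on $Q$, and $v \geq (M-m)/2$ on at least half of $Q$.

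The next and central step is a weak-Harnack-type lower bound: because $v$ is a non-negative solution which is bounded below by $(M-m)/2$ on a set of proportional mass in $Q$, one expects a uniform pointwise lower bound $\inf_{Q'} v \geq c (M-m)$ on the smaller forward parabolic cube. To prove this I would mimic Moser's approach in the discrete setting: use cut-off functions supported in $Q$ to derive discrete Caccioppoli inequalities for powers (and logarithms) of $v + \varepsilon$, then run Moser's iteration on sub-level sets using the discrete Sobolev inequality (available by interpolating the $\ell^\infty$ bound with the $\ell^{2d/(d-2)}$ bound, as in Proposition~\ref{propB1}), and close the argument using a John--Nirenberg-type covering lemma in the parabolic geometry. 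An equivalent route avoiding the logarithm is to iterate a parabolic De Giorgi truncation lemma that propagates the density statement $|\{v \geq (M-m)/2\} \cap Q| \geq |Q|/2$ forward in time through a dyadic sequence of cylinders between $Q$ and $Q'$.

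Once the weak Harnack bound is established, the conclusion is immediate: $v \geq c(M-m)$ on $Q'$ gives $\sup_{Q'} u \leq M - c(M-m)$, while trivially $\inf_{Q'} u \geq m$, so that $\osc_{Q'} u \leq (1-c)(M-m) = (1-c) \osc_{Q} u$. The main obstacle is, unsurprisingly, the weak Harnack step, which is the non-trivial part of the Moser theory in every incarnation of De Giorgi--Nash--Moser regularity; in the discrete parabolic setting used here, uniformity of the constants as $L \to \infty$ has to be tracked carefully through the Caccioppoli and Sobolev estimates. The rest of the argument (dichotomy, linearisation, reduction to a non-negative solution) is bookkeeping that proceeds exactly as in the continuous case treated by Moser~\cite{moser1964harnack}, and in the discrete form by Delmotte--Deuschel~\cite{DD05}.
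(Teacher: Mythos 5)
The paper does not give a proof of this proposition: it is quoted verbatim from Delmotte--Deuschel (Proposition~3.4 of~\cite{DD05}), together with the mean value inequality, and both are treated as black-box inputs in Appendix~B. So strictly speaking there is no in-paper proof to compare against.

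Your outline is a correct high-level reconstruction of the De~Giorgi--Moser oscillation-decay scheme, and the bookkeeping steps are all sound: the dichotomy at the median level $k=(M+m)/2$, the replacement $u\mapsto M+m-u$ (licit since the operator $\partial_t-\nabla\cdot\a\nabla$ is linear and annihilates constants), the passage to the non-negative solution $v=M-u$, and the deduction of $\osc_{Q'}u\leq(1-c)\osc_Q u$ from a pointwise lower bound $\inf_{Q'}v\geq c(M-m)$. Setting $\theta=1/(1-c)$ then recovers the stated form. However, the substance of the proposition is the weak Harnack (or density-propagation) inequality in the middle, and your proposal leaves that as a plan rather than an argument: you indicate two possible routes (Moser iteration plus a parabolic John--Nirenberg covering, or an iterated De~Giorgi truncation lemma) but do not carry either one out, and both require nontrivial work in the discrete parabolic setting to keep the constants uniform in $L$ and in the (time-dependent) environment $\a$. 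This is exactly what Delmotte and Deuschel establish in~\cite{DD05}. As a blind proposal your sketch correctly identifies the structure of the proof and where the difficulty sits, but it does not by itself constitute a proof; if one is willing to cite the weak Harnack inequality of~\cite{DD05} as the paper does, then there is nothing left to prove, and if one is not, then the weak Harnack step would need to be supplied in full.
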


\begin{proof}[Proof of Proposition~\ref{paraNash}]
The proof of Proposition~\ref{paraNash} from Proposition~\ref{mean value inequality} and Proposition~\ref{Increase of oscillations} is direct and is for instance given in~\cite[Theorem 1.31]{stroock1997markov} or in~\cite[(2.4)]{DD05}. As Proposition~\ref{paraNash} cannot be immediately obtained from the two aforementioned results, we briefly rewrite the argument.

By iterating Proposition~\ref{Increase of oscillations}, we obtain that there exists a constant $C := C(d , c_+ , c_-) < \infty$ such that, for any time $t \in [L^2/2 , L^2]$ and any edge $e = \{x , y \} \in E \left( \Lambda_{L/2}\right)$,
\begin{equation*}
    \left| \nabla u (t , e)\right| = \left| u (t , x) - u (t , y) \right| \leq  \osc_{[t - 4 , t] \times (x + \Lambda_2)} u  \leq C \theta^{-\frac{\ln L}{\ln 2}} \osc_{[t - L^2/16 , t] \times ( x + \Lambda_{L/4}) } u.
\end{equation*}
Setting $\alpha := \ln \theta /  \ln 2 > 0$, using the definition of the oscillation~\eqref{def.osc} and applying the mean value inequality stated in     Proposition~\ref{mean value inequality}, we obtain
\begin{align*}
    \left| \nabla u (t , e)\right| \leq \frac{C}{L^\alpha} \osc_{[t - L^2/16 , t] \times ( x + \Lambda_{L/4}) } u  & \leq \frac{C}{L^\alpha} \sup_{[t - L^2/16 , t] \times ( x + \Lambda_{L/4}) } |u| \\
    & \leq \frac{C}{L^\alpha} \left(  \frac{1}{L^{d +2}} \int_{t-  L^2/4}^{t} \sum_{y \in x + \Lambda_{L/2}} \left| u(s , y) \right|^2 \, ds \right)^\frac12 \\
    & \leq \frac{C}{L^\alpha} \left(  \frac{1}{L^{d +2}} \int_{0}^{L^2} \sum_{y \in \Lambda_{L}} \left| u(s , y) \right|^2 \, ds \right)^\frac12.
\end{align*}
The proof of Proposition~\ref{paraNash} is complete.
\end{proof}

\small
\bibliographystyle{abbrv}
\bibliography{RFRS}

\end{document}